\newtheorem{theorem}{Theorem}[section]
\newtheorem{lemma}[theorem]{Lemma}
\newtheorem{corollary}[theorem]{Corollary}
\newtheorem{claim}[theorem]{Claim}
\newtheorem{problem}[theorem]{Problem}
\theoremstyle{definition}
\newtheorem{definition}[theorem]{Definition}
\newcommand{\eps}{\varepsilon}
\newcommand{\Gnp}{G(n,p)}
\newcommand{\Nod}{N_\mathrm{odd}}
\newcommand{\Nev}{N_\mathrm{even}}
\title{The Hamilton space of pseudorandom graphs}
\author{
Micha Christoph\thanks{Department of Computer Science, ETH Z\"urich, Switzerland.
			Email: \href{mailto:micha.christoph@inf.ethz.ch}{\nolinkurl{micha.christoph@inf.ethz.ch}}. This author was supported by SNSF Ambizione grant No. 216071.}
\and
Rajko Nenadov\thanks{School of Computer Science, University of Auckland, New Zealand. Email: \href{mailto:rajko.nenadov@auckland.ac.nz}{\nolinkurl{rajko.nenadov@auckland.ac.nz}}}
\and 
Kalina Petrova\thanks{Department of Computer Science, ETH Z\"urich, Switzerland.
			Email: \href{mailto:kalina.petrova@inf.ethz.ch}{\nolinkurl{kalina.petrova@inf.ethz.ch}}. This author was supported by grant no. CRSII5 173721 of the Swiss National Science Foundation.}}
\date{}
\begin{document}

\maketitle

\begin{abstract}
We show that if $n$ is odd and $p \ge C \log n / n$, then with high probability Hamilton cycles in $\Gnp$ span its cycle space. More generally, we show this holds for a class of graphs satisfying certain natural pseudorandom properties. The proof is based on a novel idea of parity-switchers, which can be thought of as analogues of absorbers in the context of cycle spaces. As another application of our method, we show that Hamilton cycles in a near-Dirac graph $G$, that is, a graph $G$ with odd $n$ vertices and minimum degree $n/2 + C$ for sufficiently large constant $C$, span its cycle space.
\end{abstract}

\section{Introduction}

Given a graph $G = (V, E)$, let $\mathcal{E}(G)$ denote the vector space of subsets of $E$ over $\mathbb{F}_2$, also known as the \emph{edge space}. The elements of this vector space correspond to subsets of $E$, with addition defined as the symmetric difference. The \emph{cycle space} $\mathcal{C}(G) \subseteq \mathcal{E}(G)$ is defined as the subspace spanned by all (not necessarily induced) cycles in $G$, and $\mathcal{C}_k(G) \subseteq \mathcal{C}(G)$ --- by cycles of length exactly $k$. Determining conditions under which $\mathcal{C}_k(G) = \mathcal{C}(G)$, for some $k$, is a well-studied problem in graph theory \cite{kahncyclespace,bondy81cycle,demarco13triangles,hartman83long,locke852conn,locke85basis}.

Here we are interested in the case where $\mathcal{C}_n(G) = \mathcal{C}(G)$, that is, in the case where Hamilton cycles span the cycle space. Note that if $G$ contains an odd cycle, a necessary condition for $\mathcal{C}_n(G) = \mathcal{C}(G)$ is that $G$ has an odd number of vertices. Alspach, Locke, and Witte \cite{alspach90hamilton} showed that if $\Gamma$ is a finite abelian group of odd order, then $\mathcal{C}_n(G) = \mathcal{C}(G)$ for any connected Cayley graph $G$ over $\Gamma$. Heinig \cite{heinig2014prisms} showed that the same holds if $G$ is a graph with odd $n$ vertices and minimum degree $\delta(G) \ge (1 + \eps)n/2$, for any $\eps > 0$ and $n$ sufficiently large, and in \cite{heinighamiltonspace} he obtained  $\mathcal{C}_n(G_{n,p}) = \mathcal{C}(G_{n,p})$ with high probability for $p\geq n^{-1/2+o(1)}$. We improve both of these results. In the case of dense graphs, we improve the minimum degree to the one which is optimal up to an additive constant term.

\begin{theorem} \label{thm:dirac}
    There exists $C > 1$ such that the following holds for sufficiently large odd $n$. Let $G$ be a graph with $n$ vertices such that $\delta(G) \ge n/2 + C$. Then $\mathcal{C}_n(G) = \mathcal{C}(G)$.
\end{theorem}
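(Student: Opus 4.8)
The plan is to reduce the statement to a statement about generating the cycle space by a small explicit family of cycles, and then to use Hamilton cycles to realize each of those generators as a symmetric difference of Hamilton cycles. Recall that $\mathcal{C}(G)$ is spanned by the fundamental cycles with respect to any spanning tree, but a cleaner generating set is the following: fix any fixed Hamilton cycle $H_0$ (which exists by Dirac's theorem, since $\delta(G)\ge n/2+C > n/2$), and observe that for any other cycle $D$, the set $H_0 \triangle D$ lies in $\mathcal{C}(G)$ and is a disjoint union of paths; hence it suffices to show that every triangle and, more generally, every short cycle of $G$ is a sum of Hamilton cycles, since triangles (together with one arbitrary cycle to fix the parity coset) generate $\mathcal{C}(G)$ whenever $G$ is $2$-connected. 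Actually the key reduction I would aim for is: if $e=xy$ and $f=yz$ are two edges sharing a vertex $y$, and $g=xz$ is also an edge, then the triangle $xyz$ should be expressible as $H_1\triangle H_2$ for two Hamilton cycles $H_1,H_2$ that agree outside a small neighborhood of $\{x,y,z\}$. Since triangles of this form, as $xyz$ ranges over triangles of $G$, span $\mathcal{C}(G) \cap \ker(\text{parity})$ and a single Hamilton cycle pins down the nontrivial parity coset (here we crucially use that $n$ is odd, so a Hamilton cycle is an odd cycle), this would finish the proof.

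The main tool, following the paper's advertised method of \emph{parity-switchers}, is a local absorber-type gadget. Concretely, for a triangle $xyz$ I want to build a short path $P$ (on a bounded number of vertices, all near $x,y,z$) that has the property that $P$ together with an $xz$-path through the rest of the graph can be "rerouted" in two ways whose difference is exactly the triangle $xyz$ plus a controlled error supported on $O(1)$ edges. The error terms are themselves short cycles sharing the same structure, so one bootstraps: write each triangle as a Hamilton-cycle-difference plus strictly "simpler" short cycles, and iterate. To make the rerouting exist I would use the minimum degree condition $\delta(G)\ge n/2+C$ to guarantee, via a Pósa-rotation / Chvátal–Erdős style argument, that between any two prescribed endpoints and avoiding any $O(1)$ forbidden vertices, $G$ contains a Hamilton path; the extra additive constant $C$ is exactly what buys the robustness needed to avoid the bounded bad sets and to perform the constantly-many simultaneous local switches.

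The heart of the argument — and the step I expect to be the main obstacle — is the construction of a single parity-switcher, i.e. showing that the minimum-degree hypothesis really does allow a triangle to be "absorbed" into a difference of Hamilton cycles with only bounded-size error. The difficulty is that in a Dirac-type graph we have essentially no control over local structure around $x,y,z$ beyond degrees, so the switcher must be found dynamically: one needs to locate, near the triangle, a constant-length configuration of edges and non-edges on which two different Hamilton-path completions exist whose symmetric difference is supported there. I would handle this by a case analysis on the local neighborhood structure (how the neighborhoods of $x$, $y$, $z$ intersect), in each case exhibiting an explicit short alternating structure, and then invoking the robust Hamilton-connectivity lemma above to close it into two genuine Hamilton cycles. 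Once one parity-switcher is available for an arbitrary triangle, the remaining steps — choosing a spanning generating family of triangles, handling the parity coset with a single Hamilton cycle, and the bootstrapping on error terms — are routine linear algebra over $\mathbb{F}_2$ combined with repeated application of the same gadget.
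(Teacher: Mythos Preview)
Your proposal takes a direct route that is quite different from the paper's, and it has real gaps.

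First, the paper does \emph{not} try to exhibit each short cycle as a sum of Hamilton cycles. Instead it argues by contradiction through the orthogonal complement: if $\mathcal{C}_n(G)\neq\mathcal{C}(G)$, there is a subgraph $R\in\mathcal{C}_n(G)^\perp\setminus\mathcal{C}(G)^\perp$ which is not a cut but contains at least half of every cut (Lemma~\ref{lemma:E_r}). The whole proof then works relative to this single $R$: one finds (via the Cycle Lemma) a short even cycle with an odd number of $R$-edges, thickens it into an $R$-parity-switcher $W$, and completes one Hamilton path outside $W$; the switcher then adjusts the $R$-parity of the full Hamilton cycle to be odd, contradicting $R\in\mathcal{C}_n(G)^\perp$. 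The ``parity'' being switched is the $R$-parity, not the parity of any fixed triangle, and no bootstrapping or error-cancellation is needed.

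Second, your plan has a concrete obstruction you do not address. For any two Hamilton cycles $H_1,H_2$ on $n$ vertices, $|H_1\triangle H_2|=2n-2|H_1\cap H_2|$ is even, so $H_1\triangle H_2$ can never equal a triangle. You gesture at ``plus a controlled error supported on $O(1)$ edges'' and ``bootstrapping on error terms'', but those error terms are again short odd cycles of the same type you started with; without a potential function or some strict simplification measure, there is no reason the process terminates, and you give none. Relatedly, your claim that triangles (plus one odd cycle) generate $\mathcal{C}(G)$ for $2$-connected $G$ is false in general (e.g.\ $C_5$), and even under $\delta(G)\ge n/2+C$ it is a nontrivial statement you would have to prove separately. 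The paper's orthogonal-complement reduction sidesteps all of this: it never needs a generating set for $\mathcal{C}(G)$, and the parity-switcher has to be built only once, against the single witness $R$.
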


In the case of random graphs, we improve the bound on $p$ to $p \ge C \log n / n$, which is best possible up to the (multiplicative) constant $C$. More generally, we show an analogous statement for a class of pseudorandom graphs.

\begin{definition}[\cite{THOMASON1987307}]
A graph $G$ is \emph{$(p,\beta)$-jumbled} with $0 < p < 1 \leq \beta$ if for every $U \subseteq V(G)$, it holds that
$$ \Big|e_G(U) - p\binom{|U|}{2} \Big| \leq \beta |U|.$$
\end{definition}

\begin{theorem}
    \label{thm:main_technical}
    There exist $\eps > 0$ and $C > 1$ such that the following holds for sufficiently large odd $n$. Let $G$ be a graph on $n$ vertices, satisfying the following three conditions for some $p \geq C \log{n}/n$:
    \begin{itemize}
        \item $\delta(G)\geq (1-\eps)np$,
        \item $G$ is $(p,\beta)$-jumbled for some $\beta\leq \eps np / \log\log n$,
        \item there exists an edge between every two disjoint subsets of vertices $A, B \subseteq V(G)$ such that 
        $$
            |A|, |B| \ge \eps n \frac{\log{\log{n}}}{\log n }.
        $$
    \end{itemize}
    Then $\mathcal{C}_n(G) = \mathcal{C}(G)$.
\end{theorem}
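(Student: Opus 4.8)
The goal is to show that every element of $\mathcal{C}(G)$ — equivalently, every even-degree subgraph (cut-independent element) — is a sum of Hamilton cycles of $G$. Since $\mathcal{C}(G)$ is spanned by cycles, and a cycle of odd length $2k+1$ can be paired up appropriately, it suffices to show: (i) every short cycle, or a suitable family of short cycles generating $\mathcal{C}(G)$, lies in the span of Hamilton cycles; and (ii) handle the parity obstruction. Because $n$ is odd, a single Hamilton cycle is an odd cycle, so sums of an even number of Hamilton cycles are "even" elements and sums of an odd number are "odd" elements of $\mathcal{E}(G)$; we must be able to realize any target cycle $C_\ell$ (which has parity $\ell \bmod 2$) as a sum of the right parity number of Hamilton cycles. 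This is where the announced idea of \emph{parity-switchers} enters: gadgets that let us change the parity of a partial structure without changing which edges outside the gadget are used, analogous to how absorbers let one fix up a near-spanning structure into a spanning one.

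**Key steps.** First I would fix a short cycle $C$ (say of length $O(1)$, or more robustly, any cycle of length at most some slowly growing function) and aim to write $C$ as a sum of two Hamilton cycles when $|C|$ is even, or express a suitable odd generator using three Hamilton cycles. The mechanism: given $C$, I want two Hamilton cycles $H_1, H_2$ with $H_1 \triangle H_2 = C$. Equivalently, $H_2 = H_1 \triangle C$, so I need a Hamilton cycle $H_1$ such that $H_1 \triangle C$ is again a Hamilton cycle. If $H_1$ contains the whole edge set $E(C)$ arranged so that $C$ appears as... no — rather, the standard trick is: take a Hamilton path/cycle structure where $C$'s edges alternate between "in $H_1$" and "in $H_1\triangle C$", i.e. find a Hamilton cycle using roughly half the edges of $C$ in an alternating fashion along $C$, and whose symmetric difference with $C$ is connected and spanning. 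To build $H_1$ with prescribed behaviour on the few vertices of $C$ plus good expansion elsewhere, I would use the pseudorandomness hypotheses: the minimum-degree and jumbledness conditions give sparse-regularity/expansion, and the third (bipartite-connectivity) condition is exactly what powers rotation–extension (Pósa rotations) to close paths into Hamilton cycles even at $p \sim \log n / n$. So the plan is: reserve a small gadget (the parity-switcher) built on a constant-sized vertex set whose existence is guaranteed by the expansion hypotheses; use rotation–extension on the rest of $G$ to build flexible Hamilton paths with prescribed endpoints; and combine.

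**The parity-switcher and the main obstacle.** Concretely, a parity-switcher should be a small subgraph $S$ together with two Hamilton paths $P_1, P_2$ of $G$ sharing the same two endpoints such that $P_1 \triangle P_2$ is a single cycle of \emph{even} length lying entirely within $S$ (or within $S$ plus a controlled detour), together with a second such pair differing by an \emph{odd} cycle. With such gadgets in hand, given an arbitrary target even-degree subgraph $F \in \mathcal{C}(G)$, decompose $F$ into cycles, absorb all but a bounded-length remainder into Hamilton cycles via rotation–extension (routing each cycle of $F$ as an alternating sub-walk of some Hamilton cycle, using the parity-switcher to fix parity whenever the number of Hamilton cycles used so far has the wrong parity relative to $|F|$), and finish. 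I would organize this as: (1) a lemma that $G$ contains a parity-switcher (pure existence, from expansion + jumbledness); (2) a connecting/absorbing lemma via rotation–extension that, given any constantly many disjoint paths with prescribed endpoints in $G$ minus a small reserved set, closes them into a Hamilton cycle (this is the technical heart and uses all three hypotheses, especially the third to merge and close); (3) a reduction showing $\mathcal{C}(G)$ is generated by cycles whose edge sets can each be "threaded" through a Hamilton cycle plus its parity-corrected partner. I expect step (2) — making rotation–extension work at the $C\log n/n$ threshold while respecting the reserved parity-switcher and prescribed endpoint/edge constraints, with jumbledness parameter only up to $\eps np/\log\log n$ (hence the $\log\log n$ factors in condition three) — to be the main obstacle, since one must control the Pósa rotations' boosters carefully and cannot afford any slack beyond the $\log\log n$ losses; the parity-switcher construction itself should be comparatively routine once the right small gadget is identified.
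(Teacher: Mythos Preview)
Your proposal misses the central reduction that makes the paper's argument work, and without it the plan you sketch does not go through.

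The paper does \emph{not} attempt to write each cycle of $G$ explicitly as a symmetric difference of Hamilton cycles. Instead, it argues by contradiction via the orthogonal complement: if $\mathcal{C}_n(G)\neq\mathcal{C}(G)$, then (since $\mathcal{C}^\perp(G)$ is the cut space) there is a subgraph $R\in\mathcal{C}_n^\perp(G)\setminus\mathcal{C}^\perp(G)$, and one may take $R$ maximal. This $R$ satisfies: $R\neq G$, $R$ is not a cut, $R$ contains at least half the edges across every cut, and---crucially---every Hamilton cycle has an \emph{even} number of edges in $R$. The whole task then collapses to producing \emph{one} Hamilton cycle with an odd number of edges in $R$. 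This is dramatically easier than what you propose, because $R$ is a single fixed object with strong structural properties (large minimum degree, robust small diameter), and you only need a single counterexample.

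Correspondingly, the paper's parity-switcher is not what you describe. It is defined \emph{relative to $R$}: an even cycle $C=(v_1,\dots,v_{2k})$ containing an odd number of $R$-edges, together with disjoint paths $P_i$ from $v_i$ to $v_{2k-i+2}$. Such a gadget carries two Hamilton paths (of the gadget) from $v_1$ to $v_{k+1}$ whose symmetric difference is exactly $C$, hence they have opposite $R$-parity. One then finds a Hamilton path in $G$ minus the gadget between $v_1$ and $v_{k+1}$ (via the Hefetz--Krivelevich--Szab\'o Hamilton-connectedness criterion, fed by expansion from jumbledness) and closes it with whichever gadget-path gives odd total $R$-parity. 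Your gadget, by contrast, is meant to toggle the parity of the \emph{number} of Hamilton cycles in a sum; that is a different object and does not interface with the argument.

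Your direct approach---for a given short cycle $C$, find a Hamilton cycle $H_1$ with $H_1\triangle C$ also Hamiltonian---is not fleshed out and is genuinely delicate: it forces $H_1$ to traverse $V(C)$ in a prescribed alternating pattern, and you would have to do this (and the three-cycle version for odd $C$) for a generating family of cycles, not just once. The paper's Cycle Lemma, which uses the cut-domination property of $R$ to locate a short even cycle with odd $R$-parity, is the missing key lemma; nothing in your outline plays its role.
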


To the best of our knowledge, it is conceivable that Theorem \ref{thm:main_technical} holds assuming only the first two conditions, and moreover weakening the second condition to $\beta \le \eps np$. However, even showing that such $G$ contains a Hamilton cycle is a major open problem \cite{glock2023hamilton,krivelevich03ham}. On the positive side, we can remove the third assumption if we strengthen the jumbledness property.

\begin{corollary}
\label{cor:jumbled}
There exist $\eps > 0$ and $C > 1$ such that the following holds for sufficiently large $n$. Let $p \geq C\log{n}/n$ and $\beta \leq \eps np / \log{n}$. Then, any $(p,\beta)$-jumbled graph $G$ on $n$ vertices with minimum degree $\delta(G) \geq (1-\eps)np$ satisfies $\mathcal{C}_n(G) = \mathcal{C}(G)$.
\end{corollary}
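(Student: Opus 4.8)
The plan is to obtain Corollary~\ref{cor:jumbled} as an immediate consequence of Theorem~\ref{thm:main_technical}: the point of the stronger jumbledness hypothesis $\beta \le \eps np/\log n$ is precisely that it forces the third bullet of Theorem~\ref{thm:main_technical}, so no separate expansion assumption is needed. Concretely, I would let $\eps_0 > 0$ and $C_0 > 1$ be the constants supplied by Theorem~\ref{thm:main_technical} and prove the corollary with $\eps := \eps_0$ and $C := C_0$ (one may freely shrink $\eps$ and enlarge $C$), taking $n$ large enough that the finitely many inequalities below --- all of the form $\log\log n \ge O(1)$ --- are satisfied.

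Given a $(p,\beta)$-jumbled graph $G$ as in the corollary, the first hypothesis of Theorem~\ref{thm:main_technical} is exactly the minimum-degree condition we are given. For the second, $\log n \ge \log\log n$ for large $n$, so $\beta \le \eps_0 np/\log n \le \eps_0 np/\log\log n$, as required. It remains to verify the third bullet, which is the only step with any content. Suppose for contradiction that there are disjoint sets $A, B \subseteq V(G)$ with $|A|, |B| \ge \eps_0 n \log\log n/\log n$ and no edge between them, and put $U := A \cup B$. Since $e_G(U) = e_G(A) + e_G(B)$, applying $(p,\beta)$-jumbledness to each of $U$, $A$, $B$ yields
\[
p\binom{|U|}{2} - \beta|U| \le e_G(U) = e_G(A) + e_G(B) \le p\binom{|A|}{2} + p\binom{|B|}{2} + \beta\big(|A| + |B|\big).
\]
Using the identity $\binom{|U|}{2} = \binom{|A|}{2} + \binom{|B|}{2} + |A||B|$ together with $|A| + |B| = |U|$, this rearranges to $p\,|A|\,|B| \le 2\beta|U| = 2\beta(|A| + |B|)$; dividing by the larger of $|A|, |B|$ gives $\min(|A|,|B|) \le 4\beta/p \le 4\eps_0 n/\log n$. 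But once $\log\log n > 4$ this is strictly smaller than $\eps_0 n \log\log n/\log n \le \min(|A|,|B|)$, a contradiction. Hence all three hypotheses of Theorem~\ref{thm:main_technical} hold, and the theorem gives $\mathcal{C}_n(G) = \mathcal{C}(G)$.

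I do not expect a genuine obstacle here: the corollary is a formal strengthening-of-hypotheses reduction to Theorem~\ref{thm:main_technical}, and the only thing to check is that a $(p,\eps np/\log n)$-jumbled graph cannot contain two disjoint vertex sets of size $\Theta(n/\log n)$ with no edge between them --- a one-line computation from the jumbledness inequality via $\binom{a+b}{2} = \binom{a}{2} + \binom{b}{2} + ab$ --- which in particular rules this out for the asymptotically larger threshold $\Theta(n \log\log n/\log n)$ demanded by the theorem. The only mild bookkeeping is making sure the constants and the "sufficiently large $n$" thresholds coming from Theorem~\ref{thm:main_technical} are absorbed into those of the corollary, which is routine.
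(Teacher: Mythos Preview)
Your proposal is correct and is essentially identical to the paper's own proof: both reduce to Theorem~\ref{thm:main_technical} by checking that $(p,\eps np/\log n)$-jumbledness forces an edge between any two disjoint sets of size $\gtrsim n/\log n$, via applying the jumbledness inequality to $A$, $B$, and $A\cup B$ to obtain $\min(|A|,|B|)\le 4\beta/p$. The only cosmetic difference is that the paper takes $|A|=|B|=k$ for notational convenience, while you allow unequal sizes and divide by the larger one; the content is the same.
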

\begin{proof}
Suppose there exist disjoint $A,B\subseteq V(G)$ of size $k$ each, for which $e_G(A,B)=0$. From $(p, \beta)$-jumbledness we get
$$
    e_G(A\cup B)\geq p k(2k-1)-2\beta k = 2pk(k-1)+pk - 2\beta k
$$
and
$$
    e_G(A),e_G(B)\leq pk(k-1)/2+\beta k.
$$
From $e_G(A \cup B) = e_G(A, B) + e_G(A) + e_G(B)$, we conclude
$$
    pk\leq 4\beta.
$$
Therefore, between any pair of sets of size 
$$
    4\beta/p +1\leq \frac{4\eps n}{\log{n}}+1 \leq \frac{\eps n \log{\log{n}}}{\log{n}}
$$
there is an edge, thus $G$ satisfies all the conditions of Theorem~\ref{thm:main_technical}. 
\end{proof}




Corollary~\ref{cor:jumbled} can readily be applied to $(n,d,\lambda)$-graphs, $d$-regular graphs with $n$ vertices and the second largest absolute value of the adjacency matrix at most $\lambda$, with $d\geq C\log{n}$ and $\lambda\leq  \eps d / \log{n}$. Indeed, as shown in \cite[Lemma 2.3]{ndlambda}, $(n, d, \lambda)$-graphs are $(d/n,\lambda)$-jumbled.

\begin{corollary}
    There exist $\eps > 0$ and $C > 1$ such that the following holds for sufficiently large odd $n$. Every $(n,d,\lambda)$-graph $G$ with $d \geq C \log{n}$ and $\lambda \leq \varepsilon d / \log{n}$ satisfies $\mathcal{C}_n(G) = \mathcal{C}(G)$. 
\end{corollary}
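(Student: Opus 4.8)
The plan is to deduce this immediately from Corollary~\ref{cor:jumbled} together with the standard fact that spectral expansion implies jumbledness. By \cite[Lemma 2.3]{ndlambda}, every $(n,d,\lambda)$-graph is $(d/n,\lambda)$-jumbled, so I would set $p := d/n$ and $\beta := \lambda$ (replacing $\lambda$ by $\max\{\lambda,1\}$ if necessary so that the convention $\beta \ge 1$ in the definition of $(p,\beta)$-jumbledness is respected; this costs at most a constant factor in $\eps$) and then just verify that the three hypotheses of Corollary~\ref{cor:jumbled} hold for the $\eps, C > 1$ it provides, possibly after shrinking $\eps$ by an absolute constant.

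Concretely: first, $d \ge C\log n$ gives $p = d/n \ge C\log n / n$, the required lower bound on $p$. Second, $\lambda \le \eps d / \log n$ gives $\beta = \lambda \le \eps d/\log n = \eps np/\log n$, which is exactly the strengthened jumbledness hypothesis of Corollary~\ref{cor:jumbled}. Third, an $(n,d,\lambda)$-graph is $d$-regular, so $\delta(G) = d = np \ge (1-\eps)np$, giving the minimum-degree condition. Hence Corollary~\ref{cor:jumbled} applies and yields $\mathcal{C}_n(G) = \mathcal{C}(G)$. The oddness of $n$ is simply passed through from the statement.

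There is no real obstacle here: the only mild bookkeeping points are the edge case $\lambda < 1$ (handled as above) and checking that one may take the very same $\eps$ and $C$ as in Corollary~\ref{cor:jumbled}, or a constant-factor adjustment thereof. All of the substantive content — the parity-switcher construction and the argument that Hamilton cycles span the cycle space — is already carried by Theorem~\ref{thm:main_technical} and Corollary~\ref{cor:jumbled}, so this final statement is purely a matter of translating spectral parameters into jumbledness parameters.
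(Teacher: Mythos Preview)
Your proposal is correct and matches the paper's own justification exactly: the paper also simply observes that $(n,d,\lambda)$-graphs are $(d/n,\lambda)$-jumbled by \cite[Lemma 2.3]{ndlambda} and then applies Corollary~\ref{cor:jumbled} with $p = d/n$ and $\beta = \lambda$. There is nothing to add.
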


Finally, a simple application of the Chernoff inequality and a union bound shows that Theorem \ref{thm:main_technical} applies to random graphs. The following result was our main motivation for this work.

\begin{corollary} \label{cor:gnp}
There exists $C > 1$ such that for $p \geq C\log{n} / n$, $G \sim \Gnp$ w.h.p.\  satisfies $\mathcal{C}_n(G) = \mathcal{C}(G)$ if $n$ is odd.
\end{corollary}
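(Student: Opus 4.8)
The plan is to verify that, for a sufficiently large constant $C$, a graph $G \sim \Gnp$ with $p \ge C\log n/n$ satisfies with high probability each of the three hypotheses of Theorem~\ref{thm:main_technical} (with its~$\eps$); the corollary then follows immediately. We are free to take $C$ larger than the constant in Theorem~\ref{thm:main_technical}, and larger than any fixed function of $1/\eps$. Let me emphasize that one cannot simply invoke Corollary~\ref{cor:jumbled} here: when $p = \Theta(\log n/n)$ the strongest jumbledness that holds is $\beta = \Theta(\sqrt{np}) = \Theta(\sqrt{\log n})$, which is far above the threshold $\eps np/\log n = \Theta(1)$ of that corollary, so the ``edge between any two large sets'' hypothesis of Theorem~\ref{thm:main_technical} must be checked directly.

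Two of the three hypotheses are routine applications of Chernoff's inequality and a union bound. For the minimum degree condition, $\deg_G(v) \sim \Bin(n-1,p)$ has mean $(1-o(1))np$, so $\Pr[\deg_G(v) < (1-\eps)np] \le \exp(-\Omega(\eps^2 np))$; a union bound over the $n$ vertices is $o(1)$ once $C$ is large compared to $1/\eps^2$. For the third hypothesis, it suffices (by passing to subsets) to show that whp no two disjoint sets $A,B$ with $|A| = |B| = t := \lceil \eps n\log\log n/\log n \rceil$ have $e_G(A,B) = 0$; a fixed such pair fails with probability $(1-p)^{t^2} \le e^{-pt^2}$, where $pt^2 = \Theta(n(\log\log n)^2/\log n)$ with implied constant proportional to $C$, while the number of pairs is at most $(en/t)^{2t} = \exp(O(n(\log\log n)^2/\log n))$, so the union bound is $o(1)$ for $C$ large.

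The jumbledness hypothesis requires the most care and is where the real work lies. I claim that whp $G$ is $(p,\beta_0)$-jumbled with $\beta_0 := K\sqrt{np}$ for a suitable absolute constant $K$; since $K\sqrt{np} \le \eps np/\log\log n$ for all large $n$ (because $\sqrt{np} \to \infty$ while $\log\log n$ grows slower), this yields the second hypothesis. To prove the claim, fix $U$ with $|U| = k$, so $e_G(U) \sim \Bin(\binom{k}{2}, p)$ with mean $\mu := p\binom{k}{2}$, and bound $\Pr[|e_G(U) - \mu| > \beta_0 k]$, then union over the at most $\min\{(en/k)^k,\, 2^n\}$ sets $U$. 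When $k \le 2\beta_0 + 1$ the bound $|e_G(U) - \mu| \le \binom{k}{2} \le \beta_0 k$ holds deterministically. For larger $k$, set $\delta := \beta_0 k/\mu$ and split: if $\delta \le 1$, a sub-Gaussian Chernoff bound gives $\Pr \le 2\exp(-\Omega((\beta_0 k)^2/\mu)) \le 2\exp(-\Omega(\beta_0^2/p)) = 2\exp(-\Omega(K^2 n))$, which beats $2^n$ once $K$ is large; if $\delta = \Theta(1)$ then $\mu = \Theta(\beta_0^2/p) = \Theta(K^2 n)$ and a Chernoff bound again yields $\Pr \le \exp(-\Omega(n))$, beating $2^n$; and if $\delta$ is large (so $k$ is small, in particular $\mu \ll \beta_0 k$) one uses the Poisson-type tail $\Pr \le \exp(-\beta_0 k(\log(1+\delta)-1))$, which beats $(en/k)^k$ because $\log(en/k) = \tfrac12\log\log n + \log\delta + O(1)$, so that $\beta_0\log(1+\delta) \gg \log(en/k)$ since $\beta_0 = K\sqrt{np} \gg \log\log n$. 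The single quantitative fact that makes everything cohere is that in the transitional range $k \asymp \beta_0/p \asymp n/\sqrt{\log n}$, where $\beta_0 k$ and $\mu$ are comparable, the mean is still as large as $\mu \asymp \beta_0^2/p \asymp K^2 n$, so the Chernoff probability $e^{-\Omega(n)}$ overwhelms the $2^n$ union bound; below that range the extra decay from $\mu \ll \beta_0 k$ always outpaces the growth of $\binom{n}{k}$. Assembling these estimates proves the claim, and with Theorem~\ref{thm:main_technical} this gives Corollary~\ref{cor:gnp}.
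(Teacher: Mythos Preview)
Your proposal is correct and follows essentially the same route as the paper: verify the three hypotheses of Theorem~\ref{thm:main_technical} for $G\sim\Gnp$ and conclude. The minimum-degree and ``edge between any two large sets'' checks match the paper's almost verbatim. The only difference is the jumbledness condition: the paper simply cites the known fact (Thomason) that $\Gnp$ is w.h.p.\ $(p,2\sqrt{np})$-jumbled, whereas you reprove it from scratch via a Chernoff case analysis. Your sketch is basically sound, though the line $\log(en/k)=\tfrac12\log\log n+\log\delta+O(1)$ tacitly assumes $np=\Theta(\log n)$; for general $p$ the correct identity is $\log(en/k)=\tfrac12\log(np)+\log\delta+O(1)$, and the comparison $\beta_0\log(1+\delta)\gg\log(en/k)$ still goes through because $\sqrt{np}/\log(np)\to\infty$. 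Your observation that Corollary~\ref{cor:jumbled} does not apply directly at $p=\Theta(\log n/n)$ is also correct and worth noting.
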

\begin{proof}
Let us show that $G$ satisfies the three conditions of Theorem~\ref{thm:main_technical}, where we take $C$ large enough compared to $C_{\ref{thm:main_technical}},\eps_{\ref{thm:main_technical}}$ satisfying Theorem~\ref{thm:main_technical}. The first condition follows w.h.p. from a standard application of the Chernoff inequality. It is known, see for example \cite{thomason1987pseudo}, that $G_{n,p}$ is w.h.p. $(p,2\sqrt{np})$-jumbled for $p$ as stated, thus
$$
    2\sqrt{np} \leq \frac{\eps_{\ref{thm:main_technical}} np}{\log\log{n}}
$$
implies the second condition. Finally, let $A,B\subseteq V(G)$ be two disjoint sets of size $\eps_{\ref{thm:main_technical}} n \frac{\log\log n}{\log n}$ each. It follows that 
$$
\Pr[e_G(A,B) = 0] = (1-p)^{|A||B|}\leq e^{-p|A||B|}\leq e^{-\eps_{\ref{thm:main_technical}}^2 C n(\log\log n)^2 / \log n }.
$$
Since there are at most 
$$
    \binom{n}{|A|}\binom{n}{|B|}\leq e^{2\eps_{\ref{thm:main_technical}} n(\log\log n)^2 / \log n}
$$
choices of $A,B\subseteq V(G)$, a union bound yields that w.h.p.\ $G$ also satisfies the third condition. Thus, ensuring $C\geq C_{\ref{thm:main_technical}}$ and $C> 2/\eps_{\ref{thm:main_technical}}$, the corollary follows from Theorem~\ref{thm:main_technical}.
\end{proof}

The rest of the paper is organised as follows. In Section \ref{sec:outline} we lay out the framework and prove the Cycle Lemma (Lemma \ref{lemma:cycle_lemma}), which is the heart of the proof. We then illustrate an application of the framework by giving a simple proof of Theorem \ref{thm:dirac}. Section \ref{sec:prelim} collects known results and properties of pseudorandom graphs. We then give the proof of Theorem \ref{thm:main_technical} in Section~\ref{sec:proof_technical}, and state open problems in Section \ref{sec:concluding}.

\paragraph{Notation.} It is important to keep in mind that throughout the paper we identify a graph with its set of edges. Our notation is standard. Given a graph $G$ and disjoint subsets $S,A \subseteq V(G)$, we use $N_G(S)$ to denote the set of neighbours of vertices from $S$ in $V(G) \setminus S$, and $N_G(S, A) = N_G(S) \cap A$. For disjoint subsets $A, B \subseteq V(G)$, we denote with $G[A, B]$ the induced bipartite subgraph and with $e_G(A, B)$ the number of edges in $G[A, B]$. For a given subgraph $F \subseteq G$, we denote with $V(F) \subseteq V(G)$ the set of vertices incident to edges in $F$.

\section{Proof Strategy}\label{sec:outline}

The following lemma is our starting point. A similar statement is also used as a starting point in \cite{kahncyclespace,demarco13triangles}, however after it the two approaches diverge.

\begin{lemma} \label{lemma:E_r}
  Let $G$ be a Hamiltonian graph with odd $n$ vertices, such that $\mathcal{C}_n(G) \neq \mathcal{C}(G)$. Then there exists a subgraph $R \subseteq G$ for which the following holds:
  \begin{enumerate}[(C1)]
    \item \label{prop:counter1} $R \neq G$,
    \item \label{prop:counter2} every Hamilton cycle in $G$ contains an even number of edges from $R$, and
    \item \label{prop:counter3} for every partition $V(G) = A \cup B$ we have 
    $$
        e_{R}(A, B) \ge e_G(A, B)/2
    $$
    and $R \neq G[A, B]$.
  \end{enumerate}
\end{lemma}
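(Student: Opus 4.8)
The plan is to extract $R$ from the orthogonal complement of $\mathcal{C}_n(G)$ inside $\mathcal{C}(G)$, using the standard duality between a subspace and its annihilator under the bilinear form on $\mathcal{E}(G)$ given by the symmetric-difference inner product $\langle x, y\rangle = |x \cap y| \bmod 2$. Since $\mathcal{C}_n(G) \subsetneq \mathcal{C}(G)$, there is a vector $R \in \mathcal{E}(G)$ that is orthogonal to every Hamilton cycle but not orthogonal to all of $\mathcal{C}(G)$; equivalently, $\langle R, H\rangle = 0$ for every Hamilton cycle $H$, which is exactly property (C\ref{prop:counter2}), while $R$ pairs nontrivially with some cycle of $G$. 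I would pick such an $R$ and view it as a subgraph of $G$. The first thing to check is (C\ref{prop:counter1}): if $R = G$, then for every Hamilton cycle $\langle G, H\rangle = |E(H)| = n \equiv 1 \pmod 2$ since $n$ is odd, contradicting (C\ref{prop:counter2}); so $R \neq G$. (One should also note $R \neq \emptyset$, since the zero vector is orthogonal to everything including all of $\mathcal{C}(G)$.)

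Next I would establish the cut condition (C\ref{prop:counter3}). Fix a partition $V(G) = A \cup B$ and consider the cut $G[A,B]$ as an element of $\mathcal{E}(G)$. The key observation is that every cycle of $G$ — in particular every Hamilton cycle — meets any edge cut in an even number of edges, so the cut $G[A,B]$ is orthogonal to all of $\mathcal{C}(G)$; in the language of the edge space, cuts form the orthogonal complement of the cycle space. Consequently $R' := R \triangle G[A,B]$ is also orthogonal to every Hamilton cycle, and it is still not orthogonal to all of $\mathcal{C}(G)$ (since $G[A,B]$ is, and $R$ is not), so $R'$ satisfies (C\ref{prop:counter1}) and (C\ref{prop:counter2}) as well — i.e. replacing $R$ by $R \triangle G[A,B]$ preserves the first two properties. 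Now the number of edges of $R'$ across this particular cut is $e_{R'}(A,B) = e_G(A,B) - e_R(A,B)$, since on the edges of $G[A,B]$ the symmetric difference with the full cut flips membership. So if $R$ violated $e_R(A,B) \ge e_G(A,B)/2$ for some partition, then $R' = R \triangle G[A,B]$ would have strictly fewer edges in total (it loses $e_R(A,B)$ edges and gains $e_G(A,B) - e_R(A,B) > e_R(A,B)$... wait, that's a gain) — let me instead run the standard minimality argument: among all subgraphs satisfying (C\ref{prop:counter1}) and (C\ref{prop:counter2}) that are not orthogonal to $\mathcal{C}(G)$, choose $R$ with $|E(R)|$ minimum. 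If some partition had $e_R(A,B) < e_G(A,B)/2$, then $R \triangle G[A,B]$ would have $|E(R)| - e_R(A,B) + (e_G(A,B) - e_R(A,B)) $... this is $|E(R)| + e_G(A,B) - 2e_R(A,B) > |E(R)|$, the wrong direction, so instead one should take $R$ of \emph{maximum} size, or argue via $G \setminus R$. The cleanest route: apply minimality to the complement. Note $\langle G\setminus R, H\rangle = |E(H)| - \langle R,H\rangle \equiv 1 - 0 = 1$ for Hamilton cycles, so $G \setminus R$ is \emph{odd} on Hamilton cycles; replacing $R$ by a minimum-size subgraph that is odd on every Hamilton cycle (such exists, e.g. a single Hamilton cycle of $G$), the argument above with $G \setminus R$ in place of $R$ shows $e_{G\setminus R}(A,B) \le e_G(A,B)/2$ for every cut, which rearranges to $e_R(A,B) \ge e_G(A,B)/2$. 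Finally, $R \neq G[A,B]$ for any fixed partition: if $R = G[A,B]$ then $R$ is a cut, hence orthogonal to all of $\mathcal{C}(G)$, contradicting the choice of $R$.

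I expect the main obstacle to be bookkeeping the minimality/maximality argument so that the direction of the inequality comes out as $e_R(A,B) \ge e_G(A,B)/2$ rather than $\le$; the switch to working with the complementary subgraph $G \setminus R$ (which is the object that is genuinely "small") is the clean way to handle this, and it automatically delivers both the factor $1/2$ cut bound and, via the same comparison with a single Hamilton cycle for existence, the nontriviality condition $R \neq G[A,B]$. The remaining ingredients — that cuts and cycles annihilate each other over $\mathbb{F}_2$, and that $n$ odd forces $\langle G, H\rangle = 1$ — are standard and routine.
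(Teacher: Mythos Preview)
Your approach is essentially the paper's: choose $R \in \mathcal{C}_n^\perp(G) \setminus \mathcal{C}^\perp(G)$ extremal in size and use that adding a cut keeps you in this coset. The paper simply takes $R$ of \emph{maximum} size---precisely the option you mention in passing---and then $|R \triangle G[A,B]| \le |R|$ gives $e_R(A,B) \ge e_G(A,B)/2$ immediately, while $R \neq G[A,B]$ and $R \neq G$ follow from $R \notin \mathcal{C}^\perp(G)$ and $n$ odd respectively.

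Your detour through the complement $S = G \setminus R$ is workable but has two small slips. First, a single Hamilton cycle is \emph{not} in general odd on every Hamilton cycle (though existence of such a subgraph is trivial since $G$ itself works when $n$ is odd). Second, once you replace $S$ by a minimum-size subgraph odd on all Hamilton cycles, you lose the guarantee that the new $R = G \setminus S$ is not a cut; your last sentence appeals to ``the choice of $R$'', but that constraint was on the original $R$, not the re-chosen one. To fix this you would minimise $|S|$ subject to the extra condition $G \setminus S \notin \mathcal{C}^\perp(G)$---which is exactly equivalent to maximising $|R|$ over $\mathcal{C}_n^\perp(G) \setminus \mathcal{C}^\perp(G)$, i.e.\ the paper's one-line route.
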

\begin{proof}

Given a subspace $\mathcal{S} \subseteq \mathcal{E}(G)$, we write $\mathcal{S}^\perp$ for its orthogonal complement,
$$
    \mathcal{S}^\perp = \{D \in \mathcal{E}(G) \colon \langle D, S \rangle = 0 \text{ for all } S \in \mathcal{S} \},
$$
where
$$
    \langle D, S \rangle = \sum_{e \in E(G)} D(e) S(e) 
$$
with addition being done in $\mathbb{F}_2$ and, for any $X \in \mathcal{E}(G)$, $X(e)$ is the indicator for $e \in X$. It is well known that $\mathcal{C}^\perp(G)$ is precisely the \emph{cut space} of $G$, that is, it consists of all edge subsets corresponding to induced bipartite graphs $G[A, B]$, including $\varnothing$, for all partitions $V(G) = A \cup B$.

From $\mathcal{C}_n(G) \subseteq \mathcal{C}(G)$ and the assumption of the lemma, we conclude $\mathcal{C}_n^\perp(G)\setminus \mathcal{C}^\perp(G)$ is non-empty. Let $R \subseteq G$ be a largest (in terms of edges) element of $\mathcal{C}_n^\perp(G)\setminus \mathcal{C}^\perp(G)$.  Since $R \in \mathcal{C}_n^{\perp}(G)$, every Hamilton cycle in $G$ has an even number of edges in $R$ (by the definition). Note that $R + \mathcal{C}^\perp(G) \subseteq \mathcal{C}_n^\perp(G)$ is disjoint from $\mathcal{C}^\perp(G)$ since it is a coset of $\mathcal{C}^\perp(G)$.  Therefore, $R$ is not a cut of $G$ but contains at least half the edges over every cut $(A, B)$, as otherwise $R +  G[A, B] \in \mathcal{C}_n^\perp(G) \setminus \mathcal{C}^\perp(G)$ contradicts the maximality of $R$. Finally, we have $R \neq G$, as otherwise $R$ contains a Hamilton cycle, contradicting the fact that every Hamilton cycle has an even number of edges in $R$. It is worth noting that this is the only place where we use the fact that $n$ is odd. 
\end{proof}




Central to our proof is the notion of a \emph{parity-switcher}.
\begin{definition}[Parity-switcher] \label{def: switcher}
    Given a graph $G$ and a subgraph $R \subseteq G$, a subgraph $W \subseteq G$ is called an \textit{$R$-parity-switcher} if it consists of an even cycle $C=(v_1,v_2,\dots,v_{2k})$ with an odd number of edges in $R$, and vertex-disjoint paths $P_i$ between $v_i$ and $v_{2k-i+2}$ for $2\leq i\leq k$.
\end{definition}

A parity-switcher contains two Hamilton paths (that is, paths containing each vertex in $V(W)$):
\begin{itemize}
    \item $v_1 \to v_2 \stackrel{P_2}{\to} v_{2k} \to v_{2k-1} \stackrel{P_3}{\to} v_3 \to v_4 \stackrel{P_4}{\to} v_{2k-2} \to \ldots \to v_{k+1}$, and 
    \item $v_1 \to v_{2k} \stackrel{P_2}{\to} v_{2} \to v_{3} \stackrel{P_3}{\to} v_{2k-1} \to v_{2k-2} \stackrel{P_4}{\to} v_4 \to \ldots \to v_{k+1}$.
\end{itemize}
Importantly, the two paths have a different parity of edges from $R$, since each $P_i$ for $2\leq i\leq k$ appears fully in both paths and each edge of $C$ is contained in exactly one of the paths (see Figure \ref{fig:switcher}).

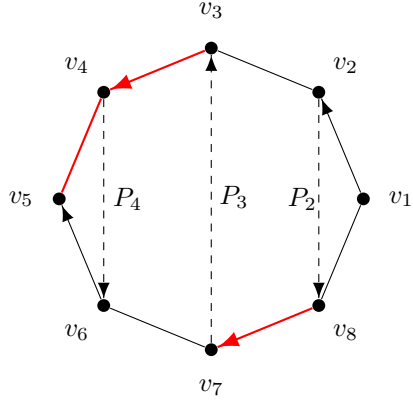
\begin{figure}[h!]   
  \centering
  \begin{tikzpicture}[scale = 1]
    \tikzstyle{blob} = [fill=black,circle,inner sep=1.7pt,minimum size=0.5pt]
    \tikzset{myptr/.style={decoration={markings,mark=at position 1 with %
        {\arrow[scale=1.5,>=latex]{>}}},postaction={decorate}}}
    
    \node[blob] (v1) at (0:2) {}; \node at (0:2.5) {$v_1$}; 
    \node[blob] (v2) at (45:2) {}; \node at (45:2.5) {$v_2$}; 
    \node[blob] (v3) at (45*2:2) {}; \node at (45*2:2.5) {$v_3$}; 
    \node[blob] (v4) at (45*3:2) {};\node at (45*3:2.5) {$v_4$}; 
    \node[blob] (v5) at (45*4:2) {};\node at (45*4:2.5) {$v_5$}; 
    \node[blob] (v6) at (45*5:2) {};\node at (45*5:2.5) {$v_6$}; 
    \node[blob] (v7) at (45*6:2) {};\node at (45*6:2.5) {$v_7$}; 
    \node[blob] (v8) at (45*7:2) {};\node at (45*7:2.5) {$v_8$}; 

    \node at (1.2, 0) {$P_2$};
    \node at (0.3, 0.02) {$P_3$};
    \node at (-1.1, 0) {$P_4$};
    
    \draw[myptr] (v1) to (v2);
    \draw (v2) -- (v3);
    \draw[myptr] (v6) to (v5);
    \draw (v6) -- (v7);
    \draw (v8) -- (v1);
    \draw[color=red, thick, myptr] (v3) -- (v4);
    \draw[color=red, thick] (v4) -- (v5);
    \draw[color=red, thick, myptr] (v8) to (v7);

    \draw[dashed, myptr] (v2) to (v8);
    \draw[dashed, myptr] (v7) to (v3);
    \draw[dashed, myptr] (v4) to (v6);
  \end{tikzpicture}
  
  \caption{A Hamilton path with an even number of edges from $R$ (represented in red) in a parity-switcher.} 
  \label{fig:switcher}  
\end{figure}

\paragraph{Proof strategy.} We use the following recipe for showing $\mathcal{C}_n(G) = \mathcal{C}(G)$:
\begin{enumerate}[(S1)]
  \item \label{strategy:E_r} Suppose $\mathcal{C}_n(G) \neq \mathcal{C}(G)$, and take $R \subseteq G$ to be a subgraph given by Lemma \ref{lemma:E_r}.
  \item \label{strategy:find_switcher} Find a (small) $R$-parity-switcher $W$:
    \begin{enumerate}[(S2.a)] \label{strategy:parity_switcher}
      \item \label{strategy:cycle} Find an even cycle $C = (v_1, \ldots, v_{2k})$ with an odd number of edges in $R$.
      \item \label{strategy:paths} Find edge-disjoint (short) paths $P_i$ between $v_i$ and $v_{2k - i + 2}$, for each $2 \le i \le k$.
    \end{enumerate}    
  \item \label{strategy:ham} Find a Hamilton path $H'$ from $v_1$ to $v_{k+1}$ in $G \setminus (V(W) \setminus \{v_1, v_{k+1}\})$.
    \item \label{strategy:parity} If $H'$ contains an odd (even) number of edges in $R$, then choose a Hamilton path $H''$ in $W$ from $v_1$ to $v_{k+1}$ with an even (odd) number of edges in $R$. 
    \item \label{strategy:finish} Conclude that the concatenation of $H'$ and $H''$ gives a Hamilton cycle $H \subseteq G$ with an odd number of edges in $R$, contradicting \ref{prop:counter2}.
\end{enumerate}

A reader familiar with the concept of \emph{absorbers} will notice their similarity to parity-switchers. Indeed, the main goal of the parity-switcher is to reduce the problem to the one where we just aim to find a Hamilton cycle, without any further restrictions (technically, the goal of absorbers is to reduce a problem to the one about almost-spanning structures; morally, its goal is to change the problem in a way which provides us more flexibility).

To implement the strategy, we only need to take care of steps \ref{strategy:find_switcher} and \ref{strategy:ham}. In particular, as only \ref{strategy:cycle} is concerned with $R$, this is the most difficult step (that being said, \ref{strategy:paths} and \ref{strategy:ham} are not simple, however there is already an existing machinery which we make use of). The following lemma takes care of Step \ref{strategy:cycle}, and is the crux of our proof.

\begin{lemma}[Cycle Lemma] \label{lemma:cycle_lemma}
    Let $G$ be a graph and $R \subseteq G$ such that the following holds for some $\ell \in \mathbb{N}$:
    \begin{enumerate}[(L1)]
        \item  \label{cycle:diam_R} For any $S \subseteq V(G)$ of size $|S| \le 2\ell$ and any two vertices $x, y \in V(G) \setminus S$, there is a path between $x$ and $y$ in $R \setminus S$ of length at most $\ell-1$,
        \item \label{cycle:nonG} $R \neq G$ and $R \neq G[A, B]$ for every partition $V(G) = A \cup B$.
    \end{enumerate}
    Then there exists an even cycle $C \subseteq G$ of length $|C| \le 2\ell$ that contains an odd number of edges from $R$.
\end{lemma}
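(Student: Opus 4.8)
The plan is to study a shortest cycle $D\subseteq G$ whose number of edges in $R$ is odd; I will call this parity the \emph{$R$-weight} of an edge set, noting it is additive under symmetric difference. Such a $D$ exists by \ref{cycle:nonG}: the cut space $\mathcal{C}^\perp(G)$ is exactly $\{G[A,B]:V(G)=A\cup B\}$, so \ref{cycle:nonG} gives $R\notin\mathcal{C}^\perp(G)$, the orthogonal complement of $\mathcal{C}(G)$, hence some cycle has odd $R$-weight. I claim $|D|\le 2\ell$ and $D$ is even, which proves the lemma.

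\emph{Length.} Suppose $|D|>2\ell$, so in particular $|D|\ge\ell+1$. Take a subpath $P\subseteq D$ with exactly $\ell$ edges, let $S$ be its $\ell-1$ interior vertices, and apply \ref{cycle:diam_R} to the endpoints of $P$ to obtain an $R$-path $Q$ in $R\setminus S$ of length at most $\ell-1$. Since $Q$ avoids the interior of $P$, it is edge-disjoint from $P$, so $Q\cup P$ is a cycle of length at most $2\ell-1$ while $Q\triangle(D\setminus P)$ is an even subgraph of length at most $|D|-1$. As $Q$ lies in both, their $R$-weights sum to the (odd) $R$-weight of $D$, so one of them is odd; it then contains a cycle of odd $R$-weight and length smaller than $|D|$, contradicting minimality. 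Hence $|D|\le 2\ell$.

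\emph{Parity.} If $D$ is even we are done, so assume $D$ is odd, say $|D|=2m+1\le 2\ell-1$ (the case $|D|=3$ is easily treated directly, so assume $m\ge 2$). A chord of $D$ splits it into two shorter cycles whose $R$-weights sum to that of $D$, one of which would be a shorter odd-$R$-weight cycle; hence $D$ is chordless in $G$. Suppose first that some edge of $D$ lies outside $R$. Letting $c_i$ count the non-$R$-edges among the $m$ consecutive edges $v_iv_{i+1},\dots,v_{i+m-1}v_{i+m}$ of $D$, one has $c_{i+1}\equiv c_i\pmod 2$ for all $i$ only if all edges of $D$ have the same status (because $\gcd(m,2m+1)=1$), which is excluded; so some $c_i$ is odd. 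For such $i$, both arcs of $D$ between $v_i$ and $v_{i+m}$ have an odd number of non-$R$-edges, and they have lengths $m$ and $m+1$. Apply \ref{cycle:diam_R} with $S=V(D)\setminus\{v_i,v_{i+m}\}$ (of size $\le 2\ell$) to get an internally $D$-avoiding $R$-path $Q$ with $2\le|Q|\le\ell-1$ between $v_i$ and $v_{i+m}$ (here $|Q|\ge 2$ since $v_iv_{i+m}\notin E(G)$); closing $Q$ with whichever of the two arcs has the same length-parity as $|Q|$ yields an even cycle of odd $R$-weight, of length at most $(\ell-1)+(m+1)\le 2\ell$.

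This leaves the case $D\subseteq R$: a chordless odd cycle of length $\le 2\ell-1$ inside $R$, in fact a shortest odd cycle of $R$. Here the non-edge $e_0=u_0w_0\in G\setminus R$ from \ref{cycle:nonG} must be used; the aim is to build an $R$-path of odd length at most $2\ell-1$ from $u_0$ to $w_0$ (so that adding $e_0$ gives the required even cycle), by routing from $u_0$ into $R$, taking a short detour whose two ways around $D$ have opposite parities, and returning to $w_0$, keeping the pieces internally disjoint by applying \ref{cycle:diam_R} with $S$ the unused part of $D$. The main obstacle is this step: connecting $u_0,w_0$ to $D$ naively costs up to $\approx 3\ell$, over budget, so a finer argument is needed. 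I expect it to proceed by contradiction: assuming no even cycle of length $\le 2\ell$ with odd $R$-weight, one first sees that every short $R$-path between the endpoints of a non-$R$-edge has even length, then that all the local parity-switching moves fail, which forces $R$ to be bipartite --- contradicting $D\subseteq R$. (In the genuinely bipartite situation the bounded-diameter property of $R$ instead forces $R=G[A,B]$ for its $2$-colouring, contradicting \ref{cycle:nonG}.) I expect this final case analysis to be the technical core of the proof.
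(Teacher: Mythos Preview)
Your approach via a shortest odd-$R$-weight cycle is genuinely different from the paper's, and the first two thirds are correct: the length bound via splitting $D$ by a short $R$-chord is clean (the line ``$Q$ lies in both'' is slightly misstated, but what you actually use --- that $(P\cup Q)\triangle\big(Q\triangle(D\setminus P)\big)=D$, so the $R$-weights add to odd --- is right), and the case where $D$ is odd with some edge outside $R$ is handled nicely by your $c_i$ argument.

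The genuine gap is the case $D\subseteq R$, which you yourself flag as unfinished. Your sketch (``all local parity-switching moves fail, forcing $R$ bipartite'') is pointing in the right direction but is not an argument, and the budget problem you identify is real: routing $u_0\to D\to w_0$ through the odd cycle costs up to $\sim 3\ell$ with the tools you have set up. This is not a corner case one can wave away: whenever $R$ is non-bipartite but every non-$R$ edge happens to have only even short $R$-paths between its endpoints, your minimal $D$ will land entirely inside $R$.

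By contrast, the paper's proof starts from a single non-$R$-edge $uw$ and runs odd/even BFS in $R\setminus\{w\}$ from $u$. If the odd and even balls of radius $\ell$ meet at some $z$, one takes the appropriate-parity $R$-path $u\to z$, then connects $z$ back to $w$ through $R$ avoiding the used vertices; the resulting cycle $w\to u\to z\to w$ has length $\le 2\ell$ and exactly one non-$R$ edge. If the balls are disjoint, then (L1) forces them to cover $V(G)\setminus\{w\}$ and be independent in $R$, so $R$ is bipartite; the second half of \ref{cycle:nonG} then produces a non-$R$ edge across the bipartition, which closes with a short (necessarily odd) $R$-path. This dichotomy is exactly what your sketch gestures at, but it dispatches both cases in a few lines and never needs the shortest-cycle scaffolding. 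If you try to complete your $D\subseteq R$ case rigorously, you will essentially reproduce this BFS argument --- at which point the minimal-cycle framework is no longer doing work.
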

\begin{proof}
    As $R \neq G$, there exists an edge $uw \in G \setminus R$. For $i \in\mathbb{N}$, denote with $\Nod^i(u)$ the set of vertices in $V(G) \setminus \{w\}$ reachable by an odd path (that is, a path with an odd number of edges) of length at most $i$ from $u$ in $R \setminus \{w\}$. Define $\Nev^i(u)$ analogously for even paths, and note that $u \in \Nev^i(u)$. We distinguish two cases.

    \paragraph{Case 1: $\Nod^{\ell}(u) \cap \Nev^{\ell}(u) \neq \emptyset$.} Choose any $z \in \Nod^{\ell}(u) \cap \Nev^{\ell}(u) $, and let $P_o$ and $P_e$ be a shortest odd and even path from $u$ to $z$ in $R \setminus \{w\}$. By assumption, the set $S = V(P_o) \cup V(P_e)\setminus\{z\}$ is of size $|S| \le 2\ell$ Therefore, by~\ref{cycle:diam_R}, we can find a path $P$ of length at most $\ell-1$ from $z$ to $w$ in $R \setminus S$. If this path has even length then set $P' = P_o$, and otherwise $P' = P_e$. In any case, we get an even cycle of length at most $2\ell$ with an odd number of edges from $R$:
    $$
        w \to u \stackrel{P'}{\to} z \stackrel{P}{\to} w
    $$
    
    \paragraph{Case 2: $\Nod^{\ell}(u) \cap \Nev^{\ell}(u) = \emptyset$.} In this case, both $\Nod^{\ell-1}(u)$ and $\Nev^{\ell-1}(u)$ are independent sets in $R$.
    If there is an edge $wz \in R$ for some $z \in \Nev^{\ell-1}(u)$, then a shortest even path in $R$ from $u$ to $z$ together with $z \to w \to u$ gives the desired cycle. Therefore, we can assume all the edges between $w$ and $\Nev^{\ell-1}(u)$ are not in $R$. Note that by \ref{cycle:diam_R}, we have that $\Nev^{\ell-1}(u) \cup \Nod^{\ell-1}(u) \cup \{w\} = V(G)$. This implies $R \subset G[A, B]$ for $A = \Nev^{\ell-1}(u) \cup \{w\}$ and $B = \Nod^{\ell-1}(u)$, thus $R$ is bipartite. Since $R \neq G[A, B]$, there exist vertices $x \in A$ and $y \in B$ such that $xy \in G \setminus R$. Let $P$ be a shortest path in $R$ from $x$ to $y$. Then by \ref{cycle:diam_R} $P$ is of length at most $\ell-1$, and as $R$ is bipartite, it is necessarily odd. Therefore, $P$ and the edge $xy$ close the desired cycle. 
\end{proof}

To demonstrate the overall strategy, we give a short proof of Theorem \ref{thm:dirac}.

\begin{proof}[Proof of Theorem \ref{thm:dirac}]

Suppose $\mathcal{C}_n(G) \neq \mathcal{C}(G)$. Let $R \subseteq G$ be a subgraph  given by Lemma \ref{lemma:E_r}, which we can apply since $G$ is Hamiltonian by Dirac's theorem~\cite{dirac1952some}. Note that we only need to implement steps \ref{strategy:parity_switcher} and \ref{strategy:ham}.
  
\paragraph{Step \ref{strategy:cycle}.} Let $\ell = 10$. We use Lemma \ref{lemma:cycle_lemma} to obtain a cycle $C = (v_1, \ldots, v_{2k})$ in $G$, of even length at most $2\ell$, which contains an odd number of edges from $R$. Condition \ref{cycle:nonG} follows from \ref{prop:counter1} and the second part of \ref{prop:counter3}. To see that \ref{cycle:diam_R} holds, first note that \ref{prop:counter3} implies $R$ has minimum degree at least $n/4 + C/2$ and it is connected. Choosing $C > 4\ell$, we have that for any $S \subseteq V(G)$ of size $|S| \le 2 \ell < C/2$, the minimum degree of $R \setminus S$ is strictly larger than $n/4$. If a shortest path between some $x$ and $y$ in $R\setminus S$ is of length at least $10$, then the neighbourhoods of the first, fourth, seventh, and tenth vertex on such a path are disjoint -- which cannot be given the minimum degree of $R \setminus S$.

\paragraph{Step \ref{strategy:paths}.} Next, we need to connect $v_i$ to $v_{2k - i + 2}$ for every $2 \le i \le k$. This can be done by sequentially applying the fact that as long as the set $S$ consisting of the vertices on the cycle and all the obtained paths so far is of size less than $C$, any two vertices $x,y \in V(G)$ are at distance at most two in $G \setminus (S\setminus \{x,y\})$. As the set $S$ is always of size at most $3\ell$ ($2\ell$ vertices on the cycle, and at most one additional vertex for each pair of vertices we are aiming to connect), this is indeed the case for $C > 3\ell$. This concludes the construction of an $R$-parity-switcher $W$. 

\paragraph{Step \ref{strategy:ham}.} Finally, $G' := G \setminus (V(W) \setminus \{v_1, v_{k+1}\})$ has minimum degree $\delta(G') > n/2$, thus by Dirac's theorem it contains a Hamilton path between any two prescribed vertices.
\end{proof}

\section{Properties of pseudorandom graphs} \label{sec:prelim}

We make extensive use of the fact that pseudorandom graphs are good \emph{expanders}.

\begin{definition}
    We say that a graph $G$ is \emph{$(s, d)$-expanding}, for $s, d \in \mathbb{N}$, if for every subset $S \subseteq V(G)$ of size $|S| \le s$ we have
    $$
        |N(S)| \ge d |S|.
    $$
\end{definition}

\begin{lemma}\label{lemma:jumbled_to_expand}
   There exists $\eps > 0$ such that the following holds for any $n$ and $0<p<1$. Suppose $G$ is an $n$-vertex $(p,\beta)$-jumbled graph with $\delta(G) \geq (1-\varepsilon)np$, where $\beta \leq \eps n p / d$ for some $d \in \mathbb{N}$. Then $G$ is $(n/(3d),d)$-expanding.
\end{lemma}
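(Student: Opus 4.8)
The plan is to argue by contradiction: suppose there is a set $S \subseteq V(G)$ with $|S| \le n/(3d)$ but $|N(S)| < d|S|$, and derive a violation of jumbledness. Write $s = |S|$ and let $T = N(S)$, so $|T| < ds \le n/3$. Consider the set $U = S \cup T$. The key structural fact is that $S$ sends \emph{no} edges outside $U$ (by definition of $N(S)$, every neighbour of a vertex in $S$ lies in $S \cup T = U$). Consequently every vertex $v \in S$ has all of its $\deg_G(v) \ge (1-\eps)np$ neighbours inside $U$, so $e_G(U) \ge \tfrac12 \sum_{v \in S}\deg_G(v) \ge \tfrac12 (1-\eps)np\, s$ (the factor $\tfrac12$ accounts for double-counting edges within $S$; edges from $S$ to $T$ are counted once, which only helps).

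On the other hand, $(p,\beta)$-jumbledness gives the upper bound
\[
    e_G(U) \le p\binom{|U|}{2} + \beta |U| \le \frac{p\,|U|^2}{2} + \beta |U|.
\]
Since $|U| = s + |T| < s + ds = (d+1)s$, we get $e_G(U) < \tfrac{p}{2}(d+1)^2 s^2 + \beta(d+1)s$. Combining this with the lower bound $e_G(U) \ge \tfrac12(1-\eps)nps$ and dividing through by $s$ yields
\[
    (1-\eps)np \le p(d+1)^2 s + 2\beta(d+1).
\]
Now I bound the two terms on the right. Using $s \le n/(3d)$ we have $p(d+1)^2 s \le p(d+1)^2 n/(3d) \le p n (d+1)^2/(3d)$; this is too large, so the bound $|U| < (d+1)s$ must be sharpened — the point is that we should instead use $|T| < ds$ directly and keep $|U|^2 < (1+d)^2 s^2$ only when $ds$ dominates, but the honest route is to note $e_G(U) \le \tfrac p2 |U|^2 + \beta|U|$ with $|U| \le (1+d)s \le (1+d)\cdot \tfrac{n}{3d} \le \tfrac{2n}{3}$ for $d \ge 1$... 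Let me instead run the cleaner version: bound $e_G(U) \le \tfrac p2 |U|^2 + \beta |U| < \tfrac p2 \cdot \tfrac{n}{3} \cdot |U| + \beta|U| = \big(\tfrac{pn}{6} + \beta\big)|U| \le \big(\tfrac{pn}{6}+\beta\big)(1+d)s$. Comparing with $e_G(U) \ge \tfrac12(1-\eps)nps$ and cancelling $s$:
\[
    \frac{(1-\eps)np}{2} \le \Big(\frac{pn}{6} + \beta\Big)(1+d) \le \Big(\frac{pn}{6} + \frac{\eps n p}{d}\Big)(1+d) \le \frac{pn(1+d)}{6} + 2\eps n p,
\]
using $\beta \le \eps np/d$ and $(1+d)/d \le 2$. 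This forces $\tfrac{1-\eps}{2} \le \tfrac{1+d}{6} + 2\eps$, which fails for, say, $d = 1$ and $\eps$ small; so this crude bound only works for small $d$, and the genuine argument must instead exploit that $|T| = |N(S)|$ is small \emph{relative to the edge count forced by the minimum degree}, i.e.\ that most of the $\ge \tfrac12(1-\eps)nps$ edges incident to $S$ must land in a set $T$ of size $< ds$, which is far too few vertices to absorb that many edges once jumbledness caps $e_G(S,T) \le p|S||T| + \beta(|S|+|T|) \le pds^2 + 2\beta ds$.

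So the correct decomposition is $e_G(U) = e_G(S) + e_G(S,T) + e_G(T)$, and since $S$ has no edges leaving $U$, $\sum_{v\in S}\deg_G(v) = 2e_G(S) + e_G(S,T) \ge (1-\eps)nps$; meanwhile jumbledness bounds $2e_G(S) + 2e_G(S,T) = 2e_G(S\cup T) - 2e_G(T) \le 2e_G(S \cup T) \le p|U|^2 + 2\beta|U|$ is again too lossy, so the sharp step is: $e_G(S) \le p\binom{s}{2} + \beta s \le \tfrac p2 s^2 + \beta s$ and $e_G(S,T) \le p s |T| + \beta(s+|T|) \le p s \cdot ds + \beta(1+d)s = pds^2 + \beta(1+d)s$. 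Hence
\[
    (1-\eps)nps \le 2e_G(S) + e_G(S,T) \le p s^2 + 2\beta s + pds^2 + \beta(1+d)s = p(1+d)s^2 + \beta(3+d)s.
\]
Dividing by $s$ and using $s \le \tfrac{n}{3d}$, so $p(1+d)s \le p(1+d)\tfrac{n}{3d} \le \tfrac{2pn}{3}$ (as $(1+d)/d \le 2$), and $\beta(3+d) \le \tfrac{\eps np(3+d)}{d} \le 4\eps np$:
\[
    (1-\eps)np \le \frac{2pn}{3} + 4\eps np,
\]
i.e.\ $1 - \eps \le \tfrac23 + 4\eps$, which is false once $\eps < \tfrac{1}{15}$. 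This contradiction completes the proof, and choosing $\eps = 1/15$ (or any smaller fixed value, adjusting the $\beta$ bound's constant accordingly) suffices. The main obstacle, as the fumbling above illustrates, is bookkeeping the edge counts so that the small size of $T = N(S)$ is actually used — one must split $e_G(U)$ into $e_G(S) + e_G(S,T) + e_G(T)$ and discard $e_G(T)$, rather than bounding $e_G(U)$ monolithically, and one must be careful that the minimum-degree sum counts $e_G(S,T)$ with multiplicity one while $e_G(S)$ with multiplicity two; everything else is routine arithmetic with the jumbledness inequality.
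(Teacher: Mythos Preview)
Your final argument (from ``So the correct decomposition is\ldots'' onward) is correct and follows essentially the same route as the paper: assume a non-expanding set $S$, lower-bound the number of edges incident to $S$ via the minimum-degree condition, upper-bound it via jumbledness together with $|N(S)| < d|S|$, and contradict $|S| \le n/(3d)$. One small slip: the bipartite bound from jumbledness is $e_G(S,T) \le p|S||T| + 2\beta(|S|+|T|)$ (cf.\ Lemma~\ref{lemma:jumbled_to_sparse}), not $p|S||T| + \beta(|S|+|T|)$; this only pushes your threshold from $\eps < 1/15$ to roughly $\eps < 1/21$ and changes nothing structurally. The paper's only cosmetic difference is in the upper bound: rather than invoking the bipartite estimate on $e_G(S,T)$, it partitions $N(S)$ into at most $d$ blocks $X_1,\dots,X_\ell$ of size $|S|$ and applies jumbledness directly to each $e_G(S \cup X_i)$, summing to get $|F| \le 2d|S|^2 p + 2\eps np|S|$. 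Both routes express the same tension and give the same contradiction; you should simply excise the false starts from the write-up.
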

\begin{proof}
    Suppose that $G$ is not $(n/(3d),d)$-expanding. Let $S\subseteq V(G)$ be a set of vertices of size $|S|\leq n/(3d)$ such that $N(S)<d|S|$. 
    Let $X_1,X_2,...,X_\ell$ be a partition of $N(S)$ such that $|X_1|=|X_2|=...=|X_{\ell-1}|=|S|$ and $|X_\ell| \leq |S|$. Note that $\ell \leq d$. Let $F\subseteq G$ be the set of edges with at least one endpoint in $S$. By the assumptions on $G$, we have:
    $$
    |F|\geq (1-\varepsilon)|S|np-e_G(S)\geq (1-\varepsilon)|S|np-p|S|^2/2-\eps np|S|/d\geq\Big(1-\frac{1}{6d}-\varepsilon\Big)|S|np-\frac{\eps np|S|}{d}.
    $$
    On the other hand, since $G$ is $(p,\beta)$-jumbled, for $i\in[\ell]$ we get:
    $$
    e_G(S\cup X_i)\leq p\binom{2|S|}{2}+2\eps np|S|/d\leq 2|S|^2p+2\eps np|S|/d.
    $$
    Therefore, 
    $$|F|\leq 2d|S|^2p+2\eps np|S|.$$
    Combining the above two observations, we get
    $$
    \Big(1-\frac{1}{6d}-\varepsilon\Big)|S|np-\frac{\eps np|S|}{d}\leq|F|\leq 2d|S|^2p+2\eps np|S|
    $$
    and so 
    $$
    \frac{1-1/(6d)-4\varepsilon}{2d}n\leq |S|,
    $$
    contradicting our assumption $|S| \leq n/(3d)$.
\end{proof}

\begin{lemma}\label{lemma:jumbled_to_cuts}
There exists $\eps > 0$ such that the following holds for any $n$ and $0<p<1$. Let $\beta\leq \eps np$, and suppose $G$ is an $n$-vertex $(p,\beta)$-jumbled graph with $\delta(G) \geq (1-\varepsilon)np$. Then for any disjoint $A,B \subseteq V$ with $A \cup B = V$, we have
$$ e_G(A,B) \geq (1-6\varepsilon)p|A||B|.$$
\end{lemma}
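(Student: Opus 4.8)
The plan is to bound $e_G(A,B)$ from below by applying the minimum-degree hypothesis and the jumbledness bound \emph{only to the smaller of the two parts}. Without loss of generality assume $|A| \le |B|$, write $a = |A|$ and $b = |B| = n - a$, so that $b \ge n/2$; if $a = 0$ the claim is trivial, so assume $a \ge 1$.

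First I would use the identity $\sum_{v \in A} \deg_G(v) = 2 e_G(A) + e_G(A,B)$ together with $\delta(G) \ge (1-\eps)np$ to get
$$
    e_G(A,B) = \sum_{v \in A} \deg_G(v) - 2 e_G(A) \ge (1-\eps)np\, a - 2 e_G(A).
$$
Next, $(p,\beta)$-jumbledness applied to the set $A$ gives $e_G(A) \le p\binom{a}{2} + \beta a \le \tfrac{1}{2} p a^2 + \beta a$, hence
$$
    e_G(A,B) \ge (1-\eps)np\, a - p a^2 - 2\beta a = p a (n - a) - \eps np\, a - 2 \beta a = p a b - \eps np\, a - 2\beta a.
$$
Finally, using $\beta \le \eps np$ I would bound the error terms by $\eps np\, a + 2 \beta a \le 3\eps np\, a$, and since $b \ge n/2$ we have $3 \eps np\, a \le 6 \eps p\, a b$. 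Combining, $e_G(A,B) \ge p a b - 6\eps p\, a b = (1 - 6\eps) p |A| |B|$, which is the claim. The argument goes through for every $\eps \in (0,1)$, so the value of $\eps$ may be chosen to fit the other lemmas.

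The one point worth highlighting is why one should not take the apparently more natural route. Writing $e_G(A,B) = e_G(V) - e_G(A) - e_G(B)$ and applying jumbledness to all three of $V$, $A$, $B$ produces an error term of order $\beta n$, i.e.\ $\eps p n^2$. This is harmless when $A$ and $B$ both have size $\Theta(n)$, but useless when $\min(|A|,|B|)$ is small (say $|A| = 1$), where the target $(1-6\eps)p|A||B|$ is itself only of order $pn$. Restricting jumbledness to the smaller part makes the error scale with $\min(|A|,|B|)$ instead, and the minimum-degree hypothesis exactly compensates for the resulting loss. Deciding to split the two hypotheses across the two parts in this way is essentially the entire content of the proof; everything else is a one-line estimate.
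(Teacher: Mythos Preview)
Your proof is correct and follows essentially the same argument as the paper: assume $|A|\le|B|$, combine the degree-sum identity $\sum_{v\in A}\deg_G(v)=2e_G(A)+e_G(A,B)$ with the minimum-degree bound and the jumbledness upper bound on $e_G(A)$, and then absorb the $3\eps np|A|$ error using $|B|\ge n/2$. Your closing paragraph explaining why applying jumbledness to all of $V$, $A$, $B$ fails when one side is small is a nice piece of exposition that the paper does not include.
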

\begin{proof}
Suppose w.l.o.g. $|A| \leq |B|$, so $|B| \geq n/2$. Then, since $G$ is $(p,\beta)$-jumbled,
$$ e_G(A) \leq p \binom{|A|}{2} + \beta |A|,$$
so the minimum degree of $G$ implies
$$ e_G(A, B) \geq |A| (1-\varepsilon)np - 2e_G(A) \geq |A| (1-\varepsilon)np - |A|^2p - 2|A| \eps np \geq |A|(n-|A|)p-3\varepsilon|A|np 
$$ 
$$ \geq |A|(n - |A| - 3\eps n)p\geq |A| |B|\Big(1 - \frac{3\eps n}{|B|}\Big)p \geq (1-6\varepsilon)|A| |B|p. $$
\end{proof}

\begin{lemma} \label{lemma:jumbled_to_sparse}
There exists $\eps > 0$ such that the following holds for any $n$ and $0<p<1$. Let $\beta\leq \eps np$.
Let $G$ be an $n$-vertex $(p,\beta)$-jumbled graph. Then for any disjoint $A,B \subseteq V$, we have
$$ e_G(A,B) \leq |A||B|p + 2\beta(|A| + |B|).$$
\end{lemma}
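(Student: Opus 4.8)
The plan is to use the elementary identity $e_G(A,B) = e_G(A\cup B) - e_G(A) - e_G(B)$, which holds because $A$ and $B$ are disjoint, and then invoke the $(p,\beta)$-jumbledness of $G$ three times. The point is to choose the right side of the two-sided jumbledness estimate in each case: an \emph{upper} bound on $e_G(A\cup B)$ and \emph{lower} bounds on $e_G(A)$ and $e_G(B)$, so that all three error terms contribute with the same sign.

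Concretely, jumbledness applied to $U = A\cup B$, $U = A$, and $U = B$ gives
\[
    e_G(A\cup B) \le p\binom{|A|+|B|}{2} + \beta(|A|+|B|), \qquad e_G(A) \ge p\binom{|A|}{2} - \beta|A|, \qquad e_G(B) \ge p\binom{|B|}{2} - \beta|B|.
\]
Plugging these into the identity above yields
\[
    e_G(A,B) \le p\left(\binom{|A|+|B|}{2} - \binom{|A|}{2} - \binom{|B|}{2}\right) + 2\beta(|A|+|B|).
\]
The final step is the purely algebraic fact that $\binom{a+b}{2} - \binom{a}{2} - \binom{b}{2} = ab$ for all nonnegative integers $a,b$, which collapses the $p$-term to exactly $p|A||B|$ and gives the claimed inequality.

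I do not expect any obstacle here: this is the ``easy direction'' of jumbledness, and in fact neither the hypothesis $\beta \le \eps np$ nor the constant $\eps$ is actually used in the argument — they appear only for notational consistency with the surrounding lemmas. The one thing to be careful about is the bookkeeping of signs in the two-sided jumbledness bounds, so that the errors add up to $2\beta(|A|+|B|)$ rather than partially cancel.
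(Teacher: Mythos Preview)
Your proof is correct and is essentially identical to the paper's own argument: both use the identity $e_G(A,B)=e_G(A\cup B)-e_G(A)-e_G(B)$, apply jumbledness as an upper bound on $e_G(A\cup B)$ and lower bounds on $e_G(A),e_G(B)$, and simplify via $\binom{a+b}{2}-\binom{a}{2}-\binom{b}{2}=ab$. Your observation that the hypothesis $\beta\le \eps np$ is never used is also accurate.
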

\begin{proof}
By $G$ being $(p,\beta)$-jumbled, we have
$$ e_G(A \cup B) \leq p \binom{|A|+|B|}{2} + \beta(|A| + |B|) $$
$$ e_G(A) \geq p \binom{|A|}{2} - \beta |A| $$
$$ e_G(B) \geq p \binom{|B|}{2} - \beta |B|. $$
Combining these, we get
$$ e_G(A,B) \leq p \binom{|A|+|B|}{2} - p \binom{|A|}{2} - p \binom{|B|}{2} + 2\beta (|A| + |B|) = p|A||B|+ 2\beta (|A| + |B|). $$
\end{proof}

Finally, the last lemma shows that large subgraphs of $(p, \beta)$-graphs are robust with respect to having a small diameter, which allows us to apply Lemma~\ref{lemma:cycle_lemma}. 

\begin{lemma} \label{lemma:sparse_diam}
    There exist $\eps > 0$ and $C > 1$ such that the following holds for sufficiently large $n$. Let $G$ be an $n$-vertex $(p, \beta)$-jumbled graph with $\delta(G) \ge (1-\eps)np$, where $p \ge C \log n / n$ and $\beta \leq \eps np$. Suppose $R \subseteq G$ is such that for every partition $A \cup B = V(G)$ we have $e_R(A, B) \ge e_G(A, B)/2$. Then, for any $S \subseteq V(G)$ of size $|S| \leq 2 \log n$, $R' = R \setminus S$ has the property that between any two vertices $v, w \in V(G) \setminus S$ there is a path of length at most $\log n-1$ in $R'$.
\end{lemma}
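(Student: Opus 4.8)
The plan is to prove the statement via a BFS ball-growth argument in $R' = R \setminus S$. For $v \in V(G) \setminus S$ and $t \ge 0$ let $B_t(v)$ denote the set of vertices reachable from $v$ by a path of length at most $t$ in $R'$. It suffices to show that for a suitably small $\eps>0$, large $C$ and all large $n$, every $v \in V(G) \setminus S$ satisfies $|B_r(v)| > n/2$ where $r := \lfloor (\log n - 1)/2 \rfloor$. Indeed, for any two $v,w \in V(G)\setminus S$ the balls $B_r(v), B_r(w) \subseteq V(G)\setminus S$ then have sizes summing to more than $n \ge |V(G)\setminus S|$, so they share a vertex $z$; concatenating a $v$–$z$ path and a $z$–$w$ path in $R'$, each of length at most $r$, gives a walk, hence a path, of length at most $2r \le \log n - 1$.

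First I would record that $R'$ inherits a large minimum degree and good outward edge-expansion from the hypotheses. Applying the cut condition to $A=\{v\}$ gives $\deg_R(v) \ge \deg_G(v)/2 \ge (1-\eps)np/2$, and since $|S| \le 2\log n \le \eps np$ (using $np \ge C\log n$ with $C \ge 2/\eps$) we get $\delta(R') \ge (\tfrac12 - 2\eps)np \ge np/3$. Next, for $T \subseteq V(G)\setminus S$ with $np/3 \le |T| \le n/2$ I would lower bound the number of $R'$-edges leaving $T$: Lemma~\ref{lemma:jumbled_to_cuts} gives $e_G(T, V(G)\setminus T) \ge (1-6\eps)p|T|(n-|T|) \ge (1-6\eps)p|T|n/2$; the cut condition halves this for $R$; and Lemma~\ref{lemma:jumbled_to_sparse} gives $e_G(T,S) \le p|T||S| + 2\beta(|T|+|S|)$, which (using $\beta \le \eps np$, $|S| \le 2\log n$, $|T| \ge np/3$, $C$ large and $\eps$ small) is a small fraction of $p|T|n$. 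Altogether, $e_{R'}\big(T,\, V(G)\setminus (S\cup T)\big) \ge p|T|n/5$.

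The core step is the growth estimate. From $\delta(R') \ge np/3$ we have $|B_1(v)| \ge np/3$. Now fix $t \ge 1$ with $|B_t(v)| \le n/2$, write $B_t = B_t(v)$, $U = N_{R'}(B_t)\setminus B_t$ and $m = |U|$, so $|B_{t+1}(v)| = |B_t| + m$. Every edge counted above lands in $U$, so $e_G(B_t,U) \ge p|B_t|n/5$, while Lemma~\ref{lemma:jumbled_to_sparse} gives $e_G(B_t,U) \le p|B_t|m + 2\beta(|B_t|+m) \le p|B_t|m + 2\eps np(|B_t|+m)$. Rearranging yields $|B_t|\big(n/5 - m - 2\eps n\big) \le 2\eps n m$, from which a short case analysis gives: if $|B_t| \le 4\eps n$ then $m \ge |B_t|/(40\eps)$, hence $|B_{t+1}(v)| \ge |B_t|/(40\eps)$; and if $|B_t| > 4\eps n$ then $m \ge n/10$, hence $|B_{t+1}(v)| \ge |B_t| + n/10$. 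Iterating from $|B_1(v)| \ge np/3 \ge C\log n/3$: in the ``geometric'' regime the ball size is multiplied by the large constant $1/(40\eps)$ each step, so after at most $\tfrac{\log n}{\log(1/(40\eps))} + 3$ steps it exceeds $4\eps n$, and then at most five further ``additive'' steps push it past $n/2$. Choosing $\eps$ small enough that $\log(1/(40\eps)) > 2$, this total is strictly below $r = \lfloor(\log n - 1)/2\rfloor$ for all large $n$, which is what we needed.

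I expect the main obstacle to be exactly this growth estimate — specifically, obtaining an expansion factor of order $1/\eps$ (a large constant), rather than merely some constant $>1$. Because $\beta$ is allowed to be as large as $\eps np$, jumbledness controls edge counts only up to an error of order $\eps np$ per vertex, so a factor of order $1/\eps$ is essentially the best one can hope for; fortunately it is just enough to keep the geometric phase shorter than $\tfrac12\log n$, leaving room for the $O(1)$ additive steps and for the initial step that jumps from a single vertex to $\Omega(np)$ vertices. The remaining delicate point is bookkeeping: choosing the absolute constants (and the dependence between $\eps$ and $C$) so that the final bound comes out as $\log n - 1$ on the nose rather than merely $O(\log n)$.
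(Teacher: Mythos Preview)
Your proposal is correct and follows essentially the same approach as the paper: a BFS ball-growth argument using the cut hypothesis together with Lemmas~\ref{lemma:jumbled_to_cuts} and~\ref{lemma:jumbled_to_sparse} to get constant-factor vertex expansion until the ball exceeds $n/2$, then intersect two balls. The only cosmetic differences are that the paper uses a fixed expansion factor $10$ (with a three-way case split on $|X|$) and removes $S$ at the vertex level via $|Y\setminus S|\ge |Y|-|S|$, whereas you use an $\eps$-dependent factor $1/(40\eps)$ and remove $S$ at the edge level via Lemma~\ref{lemma:jumbled_to_sparse}; both routes yield the required $\log n - 1$ bound.
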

\begin{proof}
Let $S$ be as in the statement of the lemma. We denote $V(G)$ by $V$, and we start by showing the following helper expansion claim.
\begin{claim}
\label{claim:expansion_of_set}
If $X \subseteq V$ has size at most $n/2$, then we have
$$ |N_{R}(X,V\setminus (X \cup S))| \geq \min\{10|X|, n/7\}.$$
\end{claim}
\begin{proof}
Let $Y:= N_{R}(X, V\setminus X)$. We first suppose $|X| \leq 2\log{n}$. For any $x \in X$ we have $\deg_R(x) \geq \deg_G(x)/2 \geq np/3$, thus for sufficiently large $C$ we have
$$ |N_R(X, V\setminus (X \cup S))| \geq \deg_R(x, V \setminus (X \cup S)) \geq C \log{n}/3 - |X| - |S| \geq 20\log{n} \geq 10|X|.$$
Next, consider the case $2\log{n} \leq |X| \leq n/200$. Suppose, towards a contradiction, that $|Y| < 20|X|$. Then, by Lemma~\ref{lemma:jumbled_to_cuts} and Lemma~\ref{lemma:jumbled_to_sparse}, we have
$$  |X| np/6  \leq e_R(X,V\setminus X) = e_R(X,Y) \leq 20|X|^2 p + 42 \beta |X| \leq 20|X| np/200 + 42 \eps np |X| <  |X| np/6,$$
where we used $|V\setminus X| \geq n/2$ and $e_R(X,V\setminus X) \geq e_G(X,V\setminus X)/2$ for the first inequality. This is a contradiction, thus $|Y| \geq 20|X|$. Therefore, in this case we have
$$|N_R(X, V\setminus (X \cup S))| = |Y \setminus S| \geq 20|X| - 2\log{n} \geq 10|X|.$$
Finally, if $|X| \geq n/200$, then, again by Lemma~\ref{lemma:jumbled_to_cuts} and Lemma~\ref{lemma:jumbled_to_sparse},
$$  |X| np/6 \leq e_R(X, Y) \leq |X| |Y|p + 2\beta (|X| + |Y|) \leq |X||Y| p + 2\eps n^2 p \leq |X||Y| p + 400\eps n|X| p,$$
implying that $|Y| \geq (1 / 6 - 400\eps)n$, therefore $|N_R(X, V\setminus (X \cup S)| = |Y \setminus S| \geq n/7$.
\end{proof}
For a set $A \subseteq V$, a vertex $x \in A$ and an integer $\ell\geq 1$, we denote by $N^{\ell}_R(x,A)$ the set of vertices in $A$ reachable from $x$ in $R[A]$ via a path of length at most $\ell$. Note that $N^0_R(x, A) = \{x\}$.
\begin{claim}
\label{claim:expansion_to_half}
    For any $x \in V\setminus S$  and $\ell = \log_{10}{n} + 7$, we have
    $$ |N^{\ell}_R(x, V \setminus S)| > n/2.$$
\end{claim}
\begin{proof}
Starting with $N^0_R(x, V \setminus S)$, we iteratively apply Claim~\ref{claim:expansion_of_set} to $X:= N^i_R(x, V\setminus S)$ to show that
$$ |N^{i+1}_R(x, V\setminus S)| \geq |N^i_R(x, V\setminus S)|  + \min\{10^{i+1}, n/7 \}.$$
\end{proof}

By Claim~\ref{claim:expansion_to_half}, for every $v,w \in V\setminus S$ with $v \neq w$, we have that $N^{\ell}_R(v,V \setminus S) \cap N^{\ell}_R(w,V \setminus S) \neq \varnothing$, implying that there is a path between $v$ and $w$ in $R' = R \setminus S$ of length at most $$2\ell \leq 2\log_{10}{n} + 14 \leq 2\frac{\log{n}}{\log{10}} + 14 \leq 0.87 \log{n} + 14 \leq \log{n} - 1.$$
\end{proof}

\subsection{Connecting vertices} 

The previous section summarises important properties of $(\beta, p)$-jumbled graphs with sufficiently large minimum degree. Here we state some known results which rely on these properties, and which we use to implement steps \ref{strategy:paths} and \ref{strategy:ham}.

\begin{theorem}[\cite{hefetz09ham}] \label{thm:ham_connected}
Let $G$ be a graph with $n$ vertices, and suppose the following two properties hold:
\begin{itemize}
    \item $G$ is $\big(\frac{4n}{\log{n}}, \log\log n/4\big)$-expanding, and
    \item between every two disjoint subsets of vertices $A, B \subseteq V(G)$ such that $|A|, |B| \ge \frac{n \log\log n }{5000\log n }$, there exists an edge.
\end{itemize}
Then, for $n$ sufficiently large and for any two vertices $x, y \in V(G)$, there exists a Hamilton path in $G$ between $x$ and $y$.
\end{theorem}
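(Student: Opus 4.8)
The plan is to run P\'osa's rotation--extension method, following the approach of Hefetz, Krivelevich and Szab\'o~\cite{hefetz09ham}. It is convenient to first reduce the statement to plain Hamiltonicity. Let $G'$ be obtained from $G$ by adding a new vertex $z$ joined to exactly $x$ and $y$. Since $\deg_{G'}(z)=2$, every Hamilton cycle of $G'$ must use both edges $zx$ and $zy$, so Hamilton cycles of $G'$ are in bijection with Hamilton paths of $G$ from $x$ to $y$; it thus suffices to produce a Hamilton cycle in $G'$. The single extra vertex $z$ is irrelevant to all the estimates below, which only involve vertex sets that may be assumed to avoid $z$, so I suppress it and treat $G'$ as satisfying the same two hypotheses as $G$. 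In particular, such a graph is connected: a component $U$ with $|U|\le n/2$ would violate the expansion property when $|U|$ is small, and would force an edge between $U$ and its complement when $|U|$ is large (both parts then have size at least $\frac{n\log\log n}{5000\log n}$).

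Now take a longest path $P$ in $G'$ and suppose, towards a contradiction, that $P$ is not spanning. The engine of the argument is the standard rotation lemma: if $Q$ is a longest path and $T$ is the set of all vertices occurring as an endpoint of some longest path obtained from $Q$ by a sequence of rotations that fix the opposite endpoint, then P\'osa's lemma gives $N(T)\subseteq T\cup B$, where $B$ is the set of path-neighbours of vertices of $T$ and $|B|\le 2|T|$, so $|N(T)|\le 3|T|$. Since $G'$ inherits the $\big(\frac{4n}{\log n},\frac{\log\log n}{4}\big)$-expansion of $G$ and $\frac{\log\log n}{4}\ge 3$ for $n$ large, we must have $|T|>\frac{4n}{\log n}$; in particular $|T|$ exceeds the threshold $\frac{n\log\log n}{5000\log n}$, so every endpoint set obtained by rotations is a ``large'' set in the sense of the second hypothesis.

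Finally comes the extension step. Starting from $P$, rotations at one end yield a large endpoint set $T$; for each $w\in T$ fix a witnessing longest path $Q_w$ and rotate it at its other end to obtain a further large set $T_w$ of second endpoints, every element of which is realised by a longest path whose vertex set is $V(P)$. Feeding a suitable disjoint pair of these sets into the edge-between-large-sets hypothesis produces an edge that either lengthens $P$ --- contradicting maximality --- or joins the two ends of a longest path into a cycle $C$ with $V(C)=V(P)$; since $V(P)\ne V(G')$ and $G'$ is connected, there is then an edge from $C$ to $V(G')\setminus V(P)$, and rerouting along it yields a path longer than $P$, again a contradiction. Hence $P$ is spanning, and one last application of the closing argument (now with $V(C)=V(G')$) gives a Hamilton cycle of $G'$, and with it the desired Hamilton $x$--$y$ path in $G$. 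The one genuinely delicate point is the bookkeeping in this last step: one must choose the two sets fed into the connection hypothesis so that they stay disjoint and remain above the threshold $\frac{n\log\log n}{5000\log n}$ even after deleting the few vertices used to witness the rotations, and then run the routine but fiddly case analysis turning the guaranteed edge into a longer path or into a cycle on $V(P)$. That bookkeeping, rather than any one clever step, is where the real work lies.
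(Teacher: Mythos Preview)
The paper does not prove this theorem at all: it is quoted verbatim as a black box from \cite{hefetz09ham} and then invoked in Step~\ref{strategy:ham}. So there is no ``paper's own proof'' to compare against; your write-up is a sketch of (a version of) the original Hefetz--Krivelevich--Szab\'o argument, not of anything the present paper does.

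As a standalone sketch of the cited result, your outline is broadly in the right spirit --- P\'osa rotation to get large endpoint sets from expansion, then the edge-between-large-sets hypothesis to extend or close --- and that is indeed the mechanism in \cite{hefetz09ham}. One point does need more care: your reduction via the auxiliary vertex $z$ of degree $2$ genuinely breaks the expansion hypothesis (the singleton $\{z\}$ has only two neighbours, while $(\log\log n)/4>2$ for large $n$), so you cannot simply say $G'$ ``inherits'' the expansion of $G$. The usual fix is either to argue Hamilton-connectedness directly without the dummy vertex, or to observe that since $z$ has degree $2$ it sits as an interior vertex of any longest path once that path has length at least $3$, so it never appears in an endpoint set $T$ produced by rotations and can be excluded from all sets to which expansion is applied; either way this needs to be said explicitly rather than waved away. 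You also honestly flag the closing-step bookkeeping as the real work and then do not carry it out, so what you have is a plan rather than a proof.
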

To find a switcher in $G$ but still be able to apply the above theorem, we have to connect some vertices with paths while maintaining that the remaining graph is an expander. In the case of Theorem \ref{thm:dirac} we got that, in some sense, for free because of the large minimum degree. The following definition and two statements provide us with the framework to achieve that in the sparse setting.

\begin{definition}
    Let $G$ be a graph and $s, D \in \mathbb{N}$. Given a graph $F$ with maximum degree at most $D$, we say that an embedding $\phi \colon F \hookrightarrow G$
    is \emph{$(s, D)$-good} if for every $X \subseteq V(G)$, of size $|X| \le s$, we have
    $$
        |N_G(X) \setminus \phi(F)| \ge |\phi(F) \cap X| + \sum_{v \in X} \left[ D - \deg_F(\phi^{-1}(v)) \right].
    $$
    (Note: We slightly abuse the notation by letting $\deg_F(\varnothing) = 0$).
\end{definition}

\begin{theorem}[\cite{draganic22rolling}] \label{thm:fp_tree}
    Let $F$ be a graph with maximum degree at most $D$ and fewer than $s/2+1$ vertices, for some $D, s \in \mathbb{N}$. Suppose we are 
    given an $(s, D+2)$-expanding graph 
    $G$ and an $(s, D)$-good embedding 
    $\phi \colon F \hookrightarrow G$. Then for every graph $F' \supseteq F$ with maximum degree at most $D$ and at most $s/2+1$ vertices, which can be obtained from $F$ by successively adding a new vertex of degree 1, there exists an $(s, D)$-good embedding $\phi' \colon F' \hookrightarrow G$ which extends $\phi$.
\end{theorem}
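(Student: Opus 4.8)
The plan is to reduce the statement to a single extension step and then iterate. Since $F'$ is built from $F$ by successively adding vertices of degree $1$, it suffices by induction on the number of added vertices to prove the following one-step claim: if $\phi\colon F\hookrightarrow G$ is $(s,D)$-good, $F^+=F\cup\{u\}$ where $u$ is a new vertex joined to a single vertex $w\in V(F)$, $\Delta(F^+)\le D$, and $|V(F^+)|\le s/2+1$, then there is a vertex $y\in N_G(\phi(w))\setminus\phi(F)$ such that the extension $\phi'$ with $\phi'(u)=y$ and $\phi'|_F=\phi$ is again $(s,D)$-good. The chaining of one-step extensions is routine, since every intermediate graph in the sequence $F=F_0\subseteq F_1\subseteq\dots\subseteq F_m=F'$ has at most $|V(F')|-1\le s/2$ vertices (for $i<m$) and maximum degree at most $\Delta(F')\le D$, so it satisfies the hypotheses of the one-step claim.

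For the one-step claim, I would work with the set function $f(X)=|N_G(X)\setminus\phi(F)|-|\phi(F)\cap X|-\sum_{v\in X}\big(D-\deg_F(\phi^{-1}(v))\big)$, so that $(s,D)$-goodness of $\phi$ is precisely the statement that $f(X)\ge 0$ for all $X$ with $|X|\le s$; call $X$ \emph{tight} if $|X|\le s$ and $f(X)=0$. A short bookkeeping computation comparing the analogous quantity for $\phi'$ on $F^+$ with $f$ shows that $\phi'$ is $(s,D)$-good if and only if $y\notin N_G(X)$ for every tight set $X$ with $\phi(w)\notin X$: the only change in the left-hand side of goodness is that $y$ is deleted from the available neighbourhood, while on the right-hand side the contribution of $y$ to $|\phi'(F^+)\cap X|$ cancels against its contribution $D-\deg_{F^+}(u)=D-1$ versus $D$ to the degree-deficiency sum, and $\phi(w)$ picks up one extra unit of degree; so the condition reduces to $f_\phi(X)\ge \mathbf 1[y\in N_G(X)]-\mathbf 1[\phi(w)\in X]$, automatic unless $X$ is tight and $\phi(w)$-free. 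Such a $y$ exists to begin with, because applying goodness to $X=\{\phi(w)\}$ gives $|N_G(\phi(w))\setminus\phi(F)|\ge 1+D-\deg_F(w)\ge 2$ (using $\deg_F(w)\le D-1$, as $\deg_{F^+}(w)\le D$). To find a valid $y$, suppose for contradiction that every $y\in N_G(\phi(w))\setminus\phi(F)$ lies in $N_G(X_y)$ for some tight, $\phi(w)$-free $X_y$. The map $X\mapsto|N_G(X)\setminus\phi(F)|$ is submodular and the remaining terms of $f$ are modular, so $f$ is submodular; hence for tight $A,B$ with $|A\cup B|\le s$ we get $f(A\cup B)+f(A\cap B)\le f(A)+f(B)=0$, and both summands are nonnegative, so the union of tight sets is again tight. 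Taking $X^\ast$ to be the union of all tight, $\phi(w)$-free sets, we obtain a tight set with $\phi(w)\notin X^\ast$ and $N_G(\phi(w))\setminus\phi(F)\subseteq N_G(X^\ast)$. But then $f(X^\ast\cup\{\phi(w)\})=f(X^\ast)-1-(D-\deg_F(w))\le-2$, and $|X^\ast\cup\{\phi(w)\}|\le s$, contradicting the goodness of $\phi$.

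The delicate point — and what I expect to be the main obstacle — is making the union-of-tight-sets argument legitimate despite the fact that $f\ge 0$ is only known on sets of size at most $s$: a priori the union of two tight sets could have size about $2(s/2+1)>s$, where submodularity would be useless. This is precisely where the $(s,D+2)$-expansion hypothesis on $G$ itself (rather than just goodness of $\phi$) enters: for a tight $X$ we have $|N_G(X)|\ge(D+2)|X|$ and $\sum_{v\in X}(D-\deg_F(\phi^{-1}(v)))\le D|X|$, so $(D+2)|X|-|\phi(F)|\le|N_G(X)\setminus\phi(F)|\le|\phi(F)|+D|X|$, giving $|X|\le|\phi(F)|=|V(F)|\le s/2$. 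Thus every tight set is small, every pairwise union arising in the construction of $X^\ast$ has size at most $s$ (so submodularity applies), and the resulting union is again tight and hence again of size at most $s/2$, so the iterated union is well behaved and the whole argument closes.
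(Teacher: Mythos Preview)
The paper does not prove this theorem at all: it is quoted verbatim from \cite{draganic22rolling} and used as a black box, so there is no ``paper's own proof'' to compare your attempt against.

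That said, your argument is essentially correct and is in fact the standard way to prove such extension lemmas (and is close to what appears in \cite{draganic22rolling}). The reduction to a single leaf-extension step is right; the function $f$ is indeed submodular (each indicator $\mathbf 1[v\notin X,\;v\sim X]$ is submodular in $X$, and the remaining terms are modular); and your use of $(s,D+2)$-expansion to force every tight set to have size at most $|\phi(F)|\le s/2$ is exactly the point of that hypothesis, making the pairwise unions stay within the size-$s$ regime where goodness applies.

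Two small imprecisions are worth flagging, neither fatal. First, the inclusion you state, $N_G(\phi(w))\setminus\phi(F)\subseteq N_G(X^\ast)$, need not hold literally: some such $y$ may lie \emph{inside} $X^\ast$ rather than in its external neighbourhood. What you actually need (and what your computation of $f(X^\ast\cup\{\phi(w)\})$ uses) is that every $y\in N_G(\phi(w))\setminus\phi(F)$ with $y\notin X^\ast$ satisfies $y\sim X^\ast$, which does follow from $y\sim X_y\subseteq X^\ast$. Second, for odd $s$ the phrasing ``fewer than $s/2+1$'' allows $|V(F)|=(s+1)/2$, so a first pairwise union of tight sets could in principle have size $s+1$; in the paper's applications $|V(F)|$ is far below $s/2$, so this boundary case is irrelevant, but in a fully general write-up you would want to tighten the bound on $|\phi(F)\cap X|$ (use $|\phi(F)\cap X|\le|X|$ rather than $\le|\phi(F)|$) or simply assume $s$ even.
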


\begin{lemma}[\cite{draganic22rolling}] \label{lemma:fp_rem}
    Let $F$ be a graph with at most $s$ vertices and maximum degree at most $D$, for some $s, D \in \mathbb{N}$. Suppose we are given a graph $G$ and an $(s, D)$-good
    embedding $\phi \colon F \hookrightarrow G$. Then for every graph $F'$ obtained from $F$ by successively removing a vertex of degree 1, the restriction $\phi'$ of $\phi$ to $F'$ is also $(s, D)$-good.
\end{lemma}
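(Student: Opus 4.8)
The plan is to prove the statement by induction on the number of deleted vertices, reducing it to a single deletion: let $u$ be a vertex of degree $1$ in $F$ (the case $\deg_F(u) = 0$ is analogous, and simpler since then no degree changes), let $z$ be its unique neighbour in $F$, and put $F' = F - u$ and $\phi' = \phi|_{V(F')}$. Deleting a vertex can only lower the order and the maximum degree, so every graph along the deletion sequence still has at most $s$ vertices and maximum degree at most $D$; hence it suffices to establish the one‑step claim and then iterate it, the base case $F' = F$ being trivial.

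Write $w = \phi(u)$ and $z' = \phi(z)$; since $\phi$ is an embedding and $uz \in E(F)$, we have $wz' \in E(G)$. Passing from $\phi$ to $\phi'$ has exactly two effects: the image loses the vertex $w$ (so the image of $\phi'$ is $\phi(F) \setminus \{w\}$), and the degree of the single vertex $z$ drops by one, $\deg_{F'}(z) = \deg_F(z) - 1$, all other degrees unchanged. I would fix an arbitrary $X \subseteq V(G)$ with $|X| \le s$ and compare the $(s,D)$-goodness inequality for $\phi'$ on $X$ with the one for $\phi$ on the same $X$, which holds by hypothesis, accounting for these two effects term by term. Deleting $w$ from the image decreases $|\phi(F) \cap X|$ by $1$ exactly when $w \in X$, and at the same time raises the $v = w$ summand of $\sum_{v \in X}[D - \deg_F(\phi^{-1}(v))]$ from $D - \deg_F(u) = D - 1$ to $D - \deg_{F'}(\varnothing) = D$, again exactly when $w \in X$; these two changes cancel. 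Deleting $w$ also increases the left‑hand side $|N_G(X) \setminus \phi(F)|$ by $1$ exactly when $w$ has a neighbour in $X$, since then $w$ becomes a freed‑up neighbour of $X$ lying outside the new image. The degree drop $\deg_{F'}(z) = \deg_F(z) - 1$ raises the $v = z'$ summand by $1$ exactly when $z' \in X$. Putting these together, the goodness inequality for $\phi'$ on $X$ follows from the one for $\phi$ on $X$ as soon as the left‑hand gain at least matches the right‑hand gain — that is, as soon as $w$ has a neighbour in $X$ whenever $z' \in X$ — and this is precisely what the edge $wz'$ guarantees; if $z' \notin X$ there is nothing to check. Hence $\phi'$ is $(s,D)$-good, completing the induction.

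The part I expect to require the most care is this last accounting when $X$ contains both $w$ and $z'$ (and possibly further vertices inside and outside the image): there the right‑hand side genuinely wants to grow by one because $\deg(z)$ dropped, and one must check that the compensating unit — coming from $w$ leaving the image while being adjacent to $z' \in X$ — is correctly produced and is not already consumed by the internal cancellation of the two $w$‑terms, in particular handling with care the convention for $N_G(X)$ in the case $w \in X$. Everything else (the reduction to a single deletion, the isolation of the two effects, and the monotonicity of the three quantities being compared) is routine.
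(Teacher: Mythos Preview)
The paper does not supply its own proof of this lemma; it is quoted from \cite{draganic22rolling} without argument, so there is nothing to compare your approach against. Your inductive reduction to a single leaf deletion, and the term-by-term bookkeeping of the two effects, is exactly the natural line.

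There is, however, a genuine gap precisely where you flag it. Under the paper's convention $N_G(X)\subseteq V(G)\setminus X$, the left-hand gain is not ``$1$ whenever $w$ has a neighbour in $X$'' but ``$1$ whenever $w\in N_G(X)$'', i.e.\ $w\notin X$ \emph{and} $w$ has a neighbour in $X$. In the case $w\in X$ and $z'\in X$ the left side gains nothing while the right side still gains one from the drop in $\deg(z)$; the two $w$-terms on the right do cancel, but there is no compensating unit on the left. Comparing with $\phi$-goodness on the same $X$ then only yields $f(\phi',X)=f(\phi,X)-1$, which can be negative. Concretely, take $D=s=2$, let $F$ be a single edge $uz$ with $\phi(u)=w$, $\phi(z)=z'$, and let $G$ consist of $w$ adjacent to $z',a_1,a_2$, $z'$ adjacent to $w,a_3,a_4$, and a clique on $\{a_1,\dots,a_4,c_1,\dots,c_5\}$. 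One checks that $\phi$ is $(2,2)$-good, yet for $X=\{w,z'\}$ the restriction $\phi'$ fails: $|N_G(X)\setminus\{z'\}|=4<5=1+(2-0)+(2-0)$.

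So as literally stated here (with $N_G(X)\cap X=\varnothing$) the lemma is false, and your proof cannot be completed in that case. The original reference presumably uses the convention that $N_G(X)$ may meet $X$, under which your accounting is correct verbatim: then the left-hand gain really is $[w\text{ has a neighbour in }X]$, the edge $wz'$ supplies it whenever $z'\in X$, and the induction closes. You should either switch to that convention explicitly, or note that the transcription of the definition/lemma needs adjusting.
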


\section{Hamilton space in sparse pseudorandom graphs}  \label{sec:proof_technical}

Throughout the section we use $V$ to denote the vertex set of $G$. We follow the recipe given in Section \ref{sec:outline}.

\begin{proof}[Proof of Theorem \ref{thm:main_technical}]
Note first that $G$ is Hamiltonian. That is because it is $(\frac{n}{3\log\log{n}}, \log\log{n})$-expanding by Lemma~\ref{lemma:jumbled_to_expand}, and so it satisfies the conditions of Theorem~\ref{thm:ham_connected}. Thus for any edge $uv \in E(G)$, there is a Hamilton path between $u$ and $v$ in $G$, closing a Hamilton cycle.

Suppose $\mathcal{C}_n(G) \neq \mathcal{C}(G)$, and let $R \subseteq G$ be a subgraph given by Lemma \ref{lemma:E_r}. We now show how to implement steps \ref{strategy:find_switcher} and \ref{strategy:ham}, which suffices for the contradiction. We refer the reader to Section \ref{sec:outline} for details. We use the following values:
$$
    \ell = \log n, \quad d = \log \log n/2, \quad \text{ and } \quad s = n / (6d).
$$

\paragraph{Step \ref{strategy:cycle}.} By applying Lemma \ref{lemma:cycle_lemma} with $\ell$, which we can do due to Lemma \ref{lemma:sparse_diam}, we obtain an even cycle $C = (v_1, \ldots, v_{2k})$ in $G$ of length at most $2\ell$ that contains an odd number of edges in $R$.

\paragraph{Step \ref{strategy:paths}.} We now find paths connecting $v_i$ to $v_{2k - i + 2}$, for $2 \le i \le k$, which together with $C$ form an $R$-parity-switcher. Let $F$ be a graph with vertex set $\{u_1, \ldots, u_{2k}\}$ ($F$ does not live in $G$), and let $\phi \colon F \hookrightarrow G$ be defined as $\phi(u_i) = v_i$. Note that $|\phi(F)| = 2k \leq 2\ell$. Recall that by Lemma \ref{lemma:jumbled_to_expand}, $G$ is $(n/(6d),2d)$-expanding. Together with $\delta(G)\geq np/2$, for every $X\subseteq V(G)$ with $|X| \leq n / (6d)$ we have
$$
    |N(X)|\geq \max\{np/2, 2d |X| \}.
$$
This implies $\phi$ is an $(s, d)$-good embedding: Consider some $X \subseteq V(G)$ of size $|X| \le n / (6d)$.
\begin{itemize}
    \item If $|X| \leq 4 \ell / d$, then
    $$
        |N_G(X) \setminus \phi(F)| \geq np/2 - |\phi(F)| \geq 6 \ell \geq |\phi(F) \cap X| + d |X|.
    $$
    
    \item If $4 \ell / d \le |X| \le n / (6d)$, then
    $$
        |N_G(X) \setminus \phi(F)| \geq 2d|X| - |\phi(F)| \geq 2d|X| - 2 \ell \ge 2 \ell + d |X| \ge |\phi(F) \cap X| + d|X|.
    $$
\end{itemize} 

Let $B_n$ be a binary tree with $s/10$ vertices and of depth at most $\log n$. We connect $v_i$ and $v_{2k-i+2}$ for $i \in \{2, \ldots, k\}$, sequentially, with paths of length at most $\log n$, as follows: 
\begin{itemize}
    \item Let $F'$ be a graph obtained from $F$ by attaching disjoint copies of $B_n$ to $u_i$ and $u_{2k-i+2}$. Denote these two copies of $B_n$ in $F'$ by $B_n^a$ and $B_n^b$.
    \item Note that $\Delta(F') \leq 3 \leq d$ and $|V(F')| \leq s/5 + 2\ell \cdot \log{n} \le s/2$. We may apply Theorem \ref{thm:fp_tree} to extend $\phi$ to an $(s, d)$-good embedding $\phi'$ of $F'$.
    \item Let $a \in \phi'(B_n^a)$ and $b \in \phi'(B_n^b)$ be such that $ab$ is an edge in $G$, which exists since 
    $$
        \frac{s}{10} = \frac{n}{30\log\log n} \geq \frac{\eps n \log{\log{n}}}{ \log{n}}.
    $$
    Let $P_{i}$ denote the path in $G$ from $v_i$ to $v_{2k-i+2}$ going through $\phi'(B_n^a)$ and $\phi'(B_n^b)$ using $ab$.
    \item Let $F''$ be obtained from $F'$ by removing all the vertices in $B_n^a$ and $B_n^b$ which are not on $\phi'^{-1}(P_i)$.
    \item Set $F := F''$ and $\phi$ to be the restriction of $\phi'$ to $F''$, which remains an $(s,d)$-good embedding by Lemma~\ref{lemma:fp_rem}.
\end{itemize}
At the end of this process, we have that every pair of vertices $v_i, v_{2k-i+2}$ is connected by a path $P_i$ in $G$, thus $\phi(F)\cup C$ is an $R$-parity-switcher which we denote by $W \subseteq G$. Crucially, the embedding $\phi$ is $(s,  d)$-good. We use this immediately in the next step. 

\paragraph{Step \ref{strategy:ham}.} Since $\Delta(F)\leq 3$, we get that
$$
    G' = G \setminus (\phi(F) \setminus \{v_1, v_{k+1}\})
$$
is $(s, d/2)$-expanding. To see this, consider any set $X\subseteq V(G')$ with $|X|\leq s$. Then
$$
    |N_{G'}(X)| \geq |N_G(X) \setminus \phi(F)|\geq (d-3)|X|\geq  d|X|/2,
$$
where the second inequality follows from the fact that $\phi$ is $(s,d)$-good. We can now apply Theorem \ref{thm:ham_connected} to obtain a Hamilton path $P$ in $G'$ between $v_1$ and $v_{k+1}$, which finishes the proof.
\end{proof}

\section{Concluding remarks} \label{sec:concluding}

\begin{itemize}
    \item Heinig \cite{heinighamiltonspace} showed that if $\mathcal{C}_n(G_{n,p}) = \mathcal{C}(G_{n,p})$ and $\Gnp$ is not a forest, then necessarily $\delta(G_{n,p}) \ge 3$. This prompts the following \emph{hitting time} problem, which would further refine Corollary \ref{cor:gnp}:

    \begin{problem}
        Suppose $n$ is odd, and consider the random graph process $\{G_m\}_{m \in \binom{n}{2}}$. Is it true that, with high probability, $\delta(G_m) \ge 3$ implies $\mathcal{C}_n(G_m) = \mathcal{C}(G_m)$?
    \end{problem}

    \item We reiterate the question asked by Heinig~\cite{heinig2014prisms}:

    \begin{problem} \label{prob:dirac}
        Suppose $n$ is odd and sufficiently large, and let $G$ be a graph with $\delta(G) > n/2$. Is it true that $\mathcal{C}_n(G) = \mathcal{C}(G)$?
    \end{problem}

    Theorem \ref{thm:dirac} answers this if $\delta(G)$ is just a bit larger, namely $\delta(G) \ge n/2 + C$ (taking $C = 41$ suffices). It would be interesting to see if the strategy outlined in Section \ref{sec:outline}, together with a stability-type case analysis akin to the one in \cite{krivelevich14robust}, for example, can be used to resolve Problem \ref{prob:dirac}. We leave this for future work.
\end{itemize}

\bibliographystyle{abbrv}
\bibliography{bibliography.bib}
\end{document}